\documentclass[12pt,twoside]{amsart}
\usepackage{amsmath}
\usepackage{amsthm}
\usepackage{amsfonts}
\usepackage{amssymb}
\usepackage{latexsym}
\usepackage{mathrsfs}
\usepackage{amsmath}
\usepackage{amsthm}
\usepackage{amsfonts}
\usepackage{amssymb}
\usepackage{latexsym}
\usepackage{geometry}
\usepackage{dsfont}
\usepackage[dvips]{graphicx}
\usepackage[colorlinks=true,linkcolor=red,citecolor=blue]{hyperref}
\usepackage{color}
\usepackage[all]{xy}

\date{}
\allowdisplaybreaks[4] \footskip=15pt
\renewcommand{\uppercasenonmath}[1]{}

\numberwithin{equation}{section} \theoremstyle{plain}
\newtheorem*{thm*}{Main Theorem}
\newtheorem{thm}{Theorem}[section]
\newtheorem{cor}[thm]{Corollary}
\newtheorem*{cor*}{Corollary}
\newtheorem{lem}[thm]{Lemma}
\newtheorem*{lem*}{Lemma}
\newtheorem{prop}[thm]{Proposition}
\newtheorem*{prop*}{Proposition}
\newtheorem{rem}[thm]{Remark}
\newtheorem*{rem*}{Remark}
\newtheorem{exa}[thm]{Example}

\newtheorem*{exa*}{Example}
\newtheorem{df}[thm]{Definition}
\newtheorem*{df*}{Definition}

\newtheorem{con}[thm]{Construction}
\newtheorem*{con2*}{Construction B}
\newtheorem*{ack*}{ACKNOWLEDGEMENTS}

\def\m{\frak m}
\def\fd{{\rm fd}}

\newcommand{\pf}{\noindent\begin {proof}}
\newcommand{\epf}{\end{proof}}

\newcommand{\Ker}{\mbox{\rm ker}}
\newcommand{\Ext}{\mbox{\rm Ext}}
\newcommand{\Hom}{\mbox{\rm Hom}}
\newcommand{\Tor}{\mbox{\rm Tor}}
\newcommand{\im}{\mbox{\rm im}}
\newcommand{\coker}{\mbox{\rm coker}}
\def\lim{\mathop{\underrightarrow{\rm lim}}\nolimits}
\def\llim{\mathop{\underleftarrow{\rm lim}}\nolimits}

\newcommand{\spli}{\operatorname{spli}}
\newcommand{\silp}{\operatorname{silp}}
\newcommand{\sfli}{\operatorname{sfli}}

\newcommand{\op}{^{\mathrm{op}}}

\begin{document}
\begin{center}
{\large  \bf Mittag-Leffler Conditions, Gorenstein Modules and Homological Invariants}
	
	\vspace{0.5cm} \ Guocheng Dai$^{a}$,\    Xiaolei Zhang$^{b,\dag}$\\

	{\footnotesize a. \ School of Mathematics Sciences, Sichuan Normal University,  Chengdu 610066, China\\
			
		b.  School of Mathematics and Statistics,  Tianshui Normal University,  Tianshui 741001, China

		$\dag$.\ Corresponding author. E-mail addresses: (Xiaolei Zhang) zxlrghj@163.com\\}
\end{center}

\bigskip
\centerline { \bf  Abstract}
\bigskip
\leftskip10truemm \rightskip10truemm \noindent

In this paper, we investigate certain properties on the Mittag-Leffler conditions via set-theoretic methods.
We establish that a strongly $\aleph_1$-presented module $M$ satisfying $\Ext_R^{\ge 1}(M,D^{(\aleph_1)})=0$
belongs to the left orthogonal class of $\overline{D}$, where $\overline{D}$ is the definable closure of $D$.
This yields the consequence that every $\aleph_1$-generated strongly Gorenstein projective module is Gorenstein flat.
Furthermore, we investigate when a flat Gorenstein projective module is projective.
Finally,  for any two-sided $\aleph_1$-coherent ring $R$, we prove the identity
$\silp R + \silp R^{\mathrm{op}} = \spli R + \spli R^{\mathrm{op}}$. \\
\vbox to 0.3cm{}\\
{\it Key Words:} Mittag-Leffler condition; Gorenstein projective module; Gorenstein flat module; Homological invariant.\\
{\it 2020 Mathematics Subject Classification:} 16E10, 16E30.

\leftskip0truemm \rightskip0truemm
\bigskip

\section*{Introduction}

Throughout this paper, all rings are assumed to be associative rings with identity, and all modules are left modules unless otherwise stated.
Let $R$ be a ring. We denote by $R^{\mathrm{op}}$ its opposite ring, and we do not distinguish between right $R$-modules and left $R^{\mathrm{op}}$-modules.
Let $\mathcal{P}$ (resp. $\mathcal{FL}$) denote the class of projective (resp. flat) modules.

The Mittag-Leffler condition for inverse systems was originally introduced by Grothen-dieck in his work \cite{Gro1,Gro2}
on sheaf cohomology and the exactness of inverse limits.
In \cite{Gro1}, Grothendieck studied derived functors of $\varprojlim$ and
identified a criterion ensuring $\varprojlim^1 = 0$, later named the Mittag-Leffler condition.
The name `condition de Mittag-Leffler' \cite{Bo} was formally adopted and popularized by Bourbaki,
drawing an analogy with the classical Mittag-Leffler theorem on domains of convergence in complex analysis.
Let $\mathcal{C}$ be an abelian category.
Roos \cite{Ro} showed that
the first derived functor of inverse limit
vanishes on countable Mittag-Leffler sequences in $\mathcal{C}$ provided that $\mathcal{C}$ satisfying the Grothendieck axioms
$\textup{AB}3$ and $\textup{AB}4^*$ and having a set of generators. The examples given by Deligne \cite{De} and Neeman
\cite{Ne} show that the condition that the category has a set of generators is necessary.
The notion was later generalized to module theory as Mittag-Leffler modules in
the work of Raynaud and Gruson \cite{RA} as a tool for studying flatness and projectivity.

The findings of these scholars demonstrate that, under specified conditions,
the Mittag-Leffler condition is both necessary and sufficient for the vanishing
of $\varprojlim^1$ for a countable inverse system.
However, it remains an open problem to determine when the derived functors vanish in any uncountable case.
This desirable property in the countable setting ensures the technical feasibility of our approach.
The key technical result Proposition \ref{prop: 2.4} states that:
For a module $D$ and a strongly $\aleph_1$-presented module $M$,
$M$ belongs to the left orthogonal complement of the definable closure of $D$ provided that
$\operatorname{Ext}_R^{\ge 1}(M,D^{(\aleph_1)})=0$.
This result serve as a sufficient condition for determining whether $M$ is strictly stationary.
Precisely, based on the vanishing of $\varprojlim^1$,
the proof employs a sophisticated construction \ref{con: C} that lifts a short
exact sequence to a $\aleph_1$-continuous directed system of such sequences whose
bonding maps are relatively injective on the appropriate module. Lemma \ref{lem: 2.3}
employs stationary set combinatorics to extract a closed unbounded subset
on which the associated inverse system is continuous.
This set-theoretic argument is crucial for obtaining the orthogonality result.


Parallel to these developments, Gorenstein homological algebra emerged. Gorenstein projective, injective, and flat modules were introduced by Enochs, Jenda, and collaborators \cite{EJ1,EJa,EJT} as generalizations of classical homological modules. One central open problem, posed by Holm \cite{HO1}, asks whether every Gorenstein projective module is Gorenstein flat. Partial answers have been obtained via set-theoretic methods, particularly through the work of  \v{S}aroch and \v{S}\'{t}ov\'{\i}\v{c}ek \cite{SS2, SS1}, who employed $\kappa$-continuous direct systems and $<\kappa$-presented modules to transfer properties from small approximations to large modules. Another open problem, raised by  Bazzoni et al. \cite{BA}, asks whether every flat Gorenstein projective module is projective. They gave a positive answer under a condition of finiteness of a certain dimension. To date, no further progress has been made on this problem.

As a direct consequence of our  Proposition \ref{prop: 2.4}, we obtain that
every $\aleph_1$-generated strongly Gorenstein projective module is Gorenstein flat for any ring.
In particular, every $\aleph_1$-generated Gorenstein projective module over $\aleph_1$-coherent rings is Gorenstein flat,
thereby providing positive partial answers to Holm's question under a size restriction (see Proposition \ref{prop: 3.1}).
Compared with prior work \cite[Theorem 3.4]{WL}, our results represent a further advancement, strengthening our confidence in the validity of the general conclusion. Furthermore, we verify under an alterative condition when a flat Gorenstein projective module is projective.
Proposition \ref{prop: 6.1} shows that a flat Gorenstein projective module that is a direct limit of projectives over an index set of cardinality $<\aleph_\omega$ is projective.
We also verify the conditions under which a  Gorenstein projective module is Gorenstein flat (see  Proposition \ref{prop: 4.8}).

The final part of the paper studies homological invariants $\operatorname{silp}R$ (injective length of projectives) and $\operatorname{spli}R$ (projective length of injectives). For two-sided $\aleph_1$-coherent rings (where every $\aleph_1$-generated ideal is $\aleph_1$-presented),
Theorem \ref{silp+} establishes that $\spli R$ is finite if and only if $\silp R$ is finite, in which case they coincide.
Furthermore, the relation $\silp R + \silp R^{\mathrm{op}} = \spli R + \spli R^{\mathrm{op}}$ holds.
It follows that $\silp R=\spli R$ for all $\aleph_1$-coherent rings $R$  which is isomorphic with its opposite $R^{\mathrm{op}}$ (e.g., commutative $\aleph_1$-coherent rings). This generalizes previous work  by Emmanouil-Talelli ($\aleph_0$-Noetherian rings),  Chatzistavridis-Emmanouil (weakly coherent rings), and Wang-Yang (generalized coherent rings) (see \cite{CE25,E11,WY25}).
Finally, we consider a commutative local ring with $\mathfrak{m}^2=0$ and  infinite $\dim_k\mathfrak{m}=\kappa$,
we prove that $R$ is a $\kappa$-coherent ring  but fails to be weakly coherent,  and thus also not generalized coherent (see Proposition  \ref{thm:main}), demonstrating the novelty and  non-triviality of our findings.


\section {\bf Techniques on Mittag-Leffler conditions}

Angeleri H\"ugel and Herbera \cite{A08} introduced the notion of a module being $\mathcal{B}$-stationary.
These conditions on modules were successfully employed in a number
of different problems ranging from tilting theory to commutative algebra,
and even to a conjecture originating in algebraic topology.
The translation of certain homological properties of modules into Mittag-Leffler conditions was a key step
in solving some algebraic problems.
The  motivation for this section stems from the
results in \cite{BH, Sa, SS2}, where it
was made apparent that, for a module $M$, the vanishing of $\Ext^1_R(M,B)$
for all modules $B$ belonging to a class $\mathcal{B}$ closed under direct sums
can be characterized in terms of (strict) $\mathcal{B}$-stationarity.
We show that stationarity is a part of a common framework that works for strongly  $\aleph_1$-presented modules.
Let's start with the following basic concepts regarding  Mittag-Leffler conditions. Following \cite{A08, RA}

\begin{df}\label{df: 3.0} \textup{Let $B$ be a module.}

 \begin{enumerate}
\item \textup{An inverse system of modules $(H_{i}, h_{ij})_{i\leq j\in I}$ is said to satisfy the
\emph{Mittag-Leffler condition} if for any $i\in I$ there exists $j \geq i$
such that $h_{ik}(H_{k})=h_{ij}(H_{j})$ for any $k\geq j$.}

\item \textup{A direct system $(F_i,~f_{ji})_{i \leq j \in I}$
 of modules is said to be $B$-\emph{stationary} if the inverse system
$(\Hom_{R}(F_{i}, B), \Hom_{R}(f_{ji}, B))_{i \leq j\in I}$ satisfies the Mittag-Leffler condition.}

\item \textup{For a module $M$ and a class $\mathcal{B}$ of left $R$-modules,  we say that $M$ is $\mathcal{B}$-\emph{stationary} provided
that for some direct system $\mathcal{F}=(F_{i}, f_{ji}\mid i<j \in I)$
consisting of finitely presented modules with $\lim \mathcal{F}=M$, the corresponding inverse
system $\Hom_{R}(\mathcal{F},B)$ of abelian groups satisfies the Mittag-Leffler condition for each
$B\in \mathcal{B}$.}
\item \textup{Moreover, if we denote by $f_{i}: F_{i}\rightarrow M$
the canonical colimit map, we say that $M$ is strict $\mathcal{B}$-stationary if moreover we have {the stabilized term
$\im(\Hom_{R}(f_{ji}, B))= \im(\Hom_{R}(f_{i}, B))$ for each
$B\in \mathcal{B}$.}.}
\end{enumerate}
\end{df}

The class of $R$-modules which are strict $\mathcal{B}$-Mittag-Leffler  is closed under direct sums and direct summands and contains all projective modules (see \cite{A08}). The following result (a slightly more general version of it) is proved in \cite{Sa}.

\begin{lem}\label{ML}\cite[Lemma 4.2]{Sa}
Let $\mathcal{B}$ be a class of $R$-modules, which is closed under direct limits and direct products. Then, any $R$-module in the left orthogonal $^{\perp}\mathcal{B}$ is strict $\mathcal{B}$-Mittag-Leffler.
\end{lem}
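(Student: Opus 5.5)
Here $^{\perp}\mathcal{B}$ means $\{M:\Ext^1_R(M,B)=0 \text{ for all } B\in\mathcal{B}\}$. The plan is the standard argument of \v{S}aroch: express $\Ext^1_R(M,-)$ on products of modules in $\mathcal{B}$ through a $\varprojlim^1$-type obstruction, and observe that orthogonality to a class closed under products and direct limits forces the inverse systems $\Hom_R(\mathcal{F},B)$ to stabilize at the image of $\Hom_R(M,B)$. Write $M=\lim \mathcal{F}$ with $\mathcal{F}=(F_i,f_{ji})_{i\le j\in I}$ a direct system of finitely presented modules and colimit maps $f_i$. Note that $\mathcal{B}$ is closed under direct sums, since a direct sum is a direct limit of finite products. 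Take the canonical presentation
\[
0\to K\xrightarrow{\ \iota\ } \bigoplus_{i\in I}F_i \xrightarrow{\ \pi\ } M\to 0 ,
\]
where $\pi$ restricts to $f_i$ on the $i$-th summand and $K$ is generated by the elements $x-f_{ji}(x)$ with $x\in F_i$.

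Fix $B\in\mathcal{B}$ and $i\in I$. We must find $j\ge i$ such that every $h\in\Hom_R(F_j,B)$ satisfies $h f_{ji}=g f_i$ for some $g\in\Hom_R(M,B)$; this yields both the Mittag-Leffler condition and strictness. Argue by contradiction: suppose that for every $j\ge i$ there is $h_j\colon F_j\to B$ with $h_jf_{ji}\notin \im\Hom_R(f_i,B)$. Put $P=B^{I_{\ge i}}\in\mathcal{B}$ and consider $\Hom_R(-,P)$ applied to the presentation. Since $\Ext^1_R(M,P)=0$, every homomorphism $K\to P$ extends to $\bigoplus F_i\to P$. Now build $\varphi\colon K\to P$ whose $j$-th coordinate is the map $K\to B$ obtained from the compatible family ``use $h_j f_{jk}$ on $F_k$ for $k\le j$, and $0$ elsewhere.'' This requires some care, because $K$ is generated by the differences $x-f_{lk}(x)$, and one must check that the coordinatewise assignments are well defined on $K$. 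The standard fix is to index the product by pairs, or to replace $I$ by a cofinal well-behaved subset and pass to the $\varinjlim$ of the $\mathcal{B}$-modules $P_j=\prod_{k\ge j}B$, which is where closure under direct limits is used.

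Given an extension $\psi\colon\bigoplus F_k\to P$, its restriction to $F_i$ differs in each coordinate $j$ from $h_jf_{ji}$ by a map that factors through $M$, up to maps killing enough of the system. Then a single finitely presented $F_i$ and a compactness (finite presentation) argument force all but boundedly many coordinates to lie in $\im\Hom_R(f_i,B)$, which contradicts the choice of the $h_j$. The finite presentation of $F_i$ is what makes $\Hom_R(F_i,-)$ commute with the direct limit of products, so that the failure survives the passage to $\varinjlim_j P_j$.

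I expect the main obstacle to be precisely this construction of $\varphi$ on $K$ in the correct member of $\mathcal{B}$: making it well defined and ensuring that the induced obstruction lands in a module of $\mathcal{B}$, namely a direct limit of products. Since the paper cites this as \cite[Lemma 4.2]{Sa}, I would follow \v{S}aroch's reduction through the ``$\varprojlim^1$ versus $\Ext^1$'' exact sequence, checking only that closure under products and direct limits is exactly what that reduction needs.
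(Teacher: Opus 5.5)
The paper gives no argument of its own for this lemma; it just quotes \v{S}aroch's Lemma 4.2. Your sketch therefore has to stand on its own, and it does not. The step you flag as the main obstacle is where the whole proof lives, and the choices you propose there do not work. Put $J=\{j\in I: j\ge i\}$, and let $\theta\colon\bigoplus_{k}F_k\to B^{J}$ have $j$-th coordinate $h_jf_{jk}$ on $F_k$ if $k\le j$, and $0$ otherwise.

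The map $K\to P=B^{J}$ you describe is just $\theta|_K$. It extends to $\bigoplus_kF_k$ for trivial reasons (namely by $\theta$), so $\Ext^1_R(M,P)=0$ tells you nothing. Your remedy, the direct limit of the $\prod_{k\ge j}B$ along the projections, is the quotient $B^{J}/U$, where
\[
U=\bigcup_{k\in I}U_k,\qquad U_k=\{x\in B^{J}: x_j=0\ \text{for all}\ j\in J\ \text{with}\ j\ge k\}.
\]
The composite of $\theta|_K$ with $B^J\to B^J/U$ is zero, so orthogonality to that module gives no information about $\theta$ either. Appealing to a $\varprojlim^1$-reduction does not help by itself: for uncountable index sets, vanishing of $\varprojlim^1$ does not force the Mittag-Leffler condition. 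Finally, your ``all but boundedly many coordinates'' step is never tied to an actual submodule in which the extension lands.

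The missing idea is to solve the extension problem in the \emph{submodule} $U$.
\begin{itemize}
\item Each $U_k\cong B^{\{j\in J:\,j\not\ge k\}}$ is a product of copies of $B$, and $U$ is their directed union, so $U\in\mathcal{B}$. This is exactly where closure under products and under direct limits is used.
\item For a generator $x-f_{lk}(x)$ of $K$ (with $x\in F_k$, $k\le l$), every coordinate $j\ge l$ of $\theta(x-f_{lk}(x))$ equals $h_jf_{jk}(x)-h_jf_{jl}f_{lk}(x)=0$. Hence $\theta(K)\subseteq U$.
\item Since $\Ext^1_R(M,U)=0$, $\theta|_K$ extends to some $\psi\colon\bigoplus_kF_k\to U$. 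Then $\theta-\psi$ vanishes on $K$, so it equals $\tilde\phi\pi$ for some $\tilde\phi\colon M\to B^{J}$.
\item Because $F_i$ is finitely generated, $\psi(F_i)\subseteq U_k$ for a single $k$. Let $p_j$ be the $j$-th projection. Reading off the coordinates $j\in J$ with $j\ge k$ on $F_i$ gives $h_jf_{ji}=p_j\tilde\phi f_i\in\im\Hom_R(f_i,B)$, contradicting the choice of $h_j$.
\end{itemize}
This is the whole ``compactness'' step: a finitely generated submodule of the directed union $U$ lies in one $U_k$. With $U$ in place of your $P$ and $\varinjlim_jP_j$, the rest of your outline goes through.
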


Strict Mittag-Leffler modules can also be described via a natural
transformation in the following way.
Let $B$ be an $R$-module and $G$ an abelian group.
The abelian group $\operatorname{Hom}_{\mathbb{Z}}(B,G)$ of all additive
maps from $B$ to $G$ carries a natural right $R$-module structure.
For any $R$-module $M$ we define an additive map
\[
\Phi = \Phi_{M} \colon
\Hom_{\mathbb{Z}}(B,G) \otimes_{R} M
\longrightarrow
\Hom_{\mathbb{Z}}(\Hom_{R}(M,B),\, G),  \tag{1}
\]
by setting, for $f \in \Hom_{\mathbb{Z}}(B,G)$ and $m \in M$,
\[
\Phi(f \otimes m)(g)=f(g(m))\qquad
(g \in \Hom_{R}(M,B)).
\]
The map $\Phi$ is clearly natural in $M$.
If $P_{*}\to M\to 0$ is a projective resolution of $M$, the naturality of
$\Phi$ yields an induced chain map
\[
\Phi_{P_{*}}\colon
\operatorname{Hom}_{\mathbb{Z}}(B,G)\otimes_{R}P_{*}
\longrightarrow
\operatorname{Hom}_{\mathbb{Z}}\!\bigl(\operatorname{Hom}_{R}(P_{*},B),\,G\bigr).
\]
Assuming now that $G$ is divisible and passing to homology, we obtain additive maps
\begin{equation}
	\Phi_{M}^{(n)}\colon
	\Tor_{n}^{R} (\Hom_{\mathbb{Z}}(B,G),M)
	\longrightarrow
	\Hom_{\mathbb{Z}} (\Ext_{R}^{n}(M,B),G),
	\qquad n\ge 0, \tag{2}
\end{equation}
which are independent of the chosen projective resolution of $M$.
For a fixed resolution we denote by $\Omega^{n}M$ the $n$-th syzygy module, i.e.
$\Omega^{n}M=\operatorname{im}(P_{n}\to P_{n-1})$.
A proof of the following characterization can be found in \cite[Proposition 1.5]{E11};
the argument relies on \cite[Theorem 8.11]{A08}.

\begin{prop}\label{nSML}
Let $\mathcal{B}$ be a class of $R$-modules and fix a non-negative integer $n$. Then, the following conditions are equivalent for an $R$-module $M$:
\begin{enumerate}
\item[(i)] The map (2) is injective for any $R$-module $B \in \mathcal{B}$ and any divisible abelian group $G$.
\item[(ii)] The $n$-th syzygy module $\Omega^{n}M$ of $M$ is strict $\mathcal{B}$-Mittag-Leffler.
\end{enumerate}
\end{prop}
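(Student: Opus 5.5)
We reduce first to the case $n=0$ by dimension shifting. Fix a projective resolution $P_*\to M\to 0$ and put $K=\Omega^{n}M$. For $n\ge 1$ we have natural isomorphisms
\[
\Tor_n^R(\Hom_{\mathbb Z}(B,G),M)\cong \ker\bigl(\Hom_{\mathbb Z}(B,G)\otimes_R K\to \Hom_{\mathbb Z}(B,G)\otimes_R P_{n-1}\bigr)
\]
and
\[
\Ext^n_R(M,B)\cong \coker\bigl(\Hom_R(P_{n-1},B)\to \Hom_R(K,B)\bigr).
\]
Because $G$ is divisible, $\Hom_{\mathbb Z}(-,G)$ is exact, so $\Hom_{\mathbb Z}(\Ext^n_R(M,B),G)$ is the kernel of $\Hom_{\mathbb Z}(\Hom_R(K,B),G)\to \Hom_{\mathbb Z}(\Hom_R(P_{n-1},B),G)$. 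Naturality of $\Phi$ gives a commutative square between these two kernel descriptions, and $\Phi^{(n)}_M$ is the restriction of $\Phi_K$ to the kernels. The map $\Phi_{P_{n-1}}$ is injective: for finitely generated free modules $\Phi$ is an isomorphism, and $\Phi$ commutes with direct sums into products injectively, so it is injective on free modules and hence on their summands. A diagram chase then shows that $\Phi^{(n)}_M$ is injective if and only if $\Phi_K$ restricted to $\ker(\Hom_{\mathbb Z}(B,G)\otimes K\to \Hom_{\mathbb Z}(B,G)\otimes P_{n-1})$ is injective. Since $\Phi_{P_{n-1}}$ is injective, any element of $\ker\Phi_K$ already lies in that kernel, so this is the same as injectivity of $\Phi_K$ itself. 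This reduces (i) to the statement that $\Phi_K$ is injective for all $B\in\mathcal B$ and divisible $G$; for $n=0$ this reduction is trivial.

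It then remains to prove that $\Phi_K$ is injective for all $B\in\mathcal B$ and all divisible $G$ if and only if $K$ is strict $\mathcal B$-Mittag-Leffler. This is the content of \cite[Theorem 8.11]{A08}, which we would invoke directly. If instead we had to argue it, we would write $K=\lim F_i$ with each $F_i$ finitely presented. For such $F_i$, $\Phi_{F_i}$ is an isomorphism because $\Hom_{\mathbb Z}(B,G)\otimes_R F_i\cong \Hom_{\mathbb Z}(\Hom_R(F_i,B),G)$. Hence $\Hom_{\mathbb Z}(B,G)\otimes K\cong\lim \Hom_{\mathbb Z}(\Hom_R(F_i,B),G)$, and $\ker\Phi_K$ measures the failure of the images $\im\Hom(f_i,B)$ to coincide with the stabilized images. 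Taking $G=\mathbb Q/\mathbb Z$, which is an injective cogenerator, and suitable test elements detects strict stationarity.

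The main obstacle is exactly this last equivalence, especially the implication from injectivity of $\Phi$ to strict stationarity, which needs a careful choice of $G$ and of the elements used; for this we rely on \cite{A08}. A secondary point requiring care is checking that the square in the first step commutes, with the correct signs, so that the dimension-shifting identification is legitimate.
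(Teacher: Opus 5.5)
Your proposal is correct and follows essentially the route the paper relies on by citing \cite[Proposition 1.5]{E11}: you dimension-shift, using that $\Phi_P$ is injective for projective $P$, to identify injectivity of $\Phi^{(n)}_M$ with injectivity of $\Phi_{\Omega^{n}M}$, and you then invoke \cite[Theorem 8.11]{A08} for the case $n=0$. One small caution about your fallback sketch: with the single group $G=\mathbb{Q}/\mathbb{Z}$, injectivity of $\Phi_K$ only gives, for each character of $\Hom_R(F_i,B)$ vanishing on $\im\Hom_R(f_i,B)$, an index $j$ that depends on that character. The uniform index required for strict stationarity comes from testing with $G=(\mathbb{Q}/\mathbb{Z})^{X}$, where $X$ is the set of all such characters, and this is why (i) quantifies over all divisible $G$.
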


\vspace{0.2cm}

Let $\{H_i, g_{ji}\mid i\leq j\in I\}$ be an  inverse system of $R$-modules. Following \cite{JE}, there is a complex
$${\xymatrix{0\ar[r]&\llim H_{i}\ar[r]&\prod_{i}H_{i}\ar[r]^(.4){\Delta_{0}}&\prod_{i_{0}<i_{1}}H_{i_{0}i_{1}}\ar[r]^(.45){\Delta_{1}}&
\prod_{i_{0}<i_{1}<i_{2}}H_{i_{0}i_{1}i_{2}}\ar[r]^(.7){\Delta_{2}}&\cdots,
}}$$
where $H_{i_{0}i_{1}\cdots i_{n}}=H_{i_{0}}$  for all $i_0<i_{1}<\cdots<i_{n}\in I$  and
\begin{align*}
\Delta_{0}((h_i)_{i\in I})=(h_{i_0}-g_{i_1i_0}(h_{i_0}))_{_{i_{0}<i_{1}}},\\
\Delta_{1}((h_{i_{0}i_{1}})_{i_{0}<i_{1}})=(h_{i_0i_2}-h_{i_0i_1}-g_{i_1i_0}(h_{i_1i_2}))_{i_{0}<i_{1}<i_{2}}.
\end{align*}
$\llim^n H_i=n$-th cohomology group of the complex, i.e., $\llim^{n}H_i=\frac{\Ker (\Delta_{n})}{\im (\Delta_{n-1})}.$

Let $\mathcal{M}=(F_{i},~f_{ji}:F_{i}\rightarrow F_{j}\mid i\leq j\in I)$
be a direct system of R-modules  with $M=\lim\mathcal{M}$. By the exactness of the direct limit there is an acyclic complex:
$${\xymatrix{\cdots\ar[r]^(.25){\delta_{2}}&
\bigoplus_{i_{0}<i_{1}<i_{2}}F_{i_{0}i_{1}i_{2}}\ar[r]^(.55){\delta_{1}}&\bigoplus_{i_{0}<i_{1}}F_{i_{0}i_{1}}
\ar[r]^(.55){\delta_{0}}&\bigoplus_{i}F_{i}\ar[r]&\lim F_{i}\ar[r]&0,
}}$$

where $F_{i_{0}i_{1}\cdots i_{n}}=F_{i_{0}}$  for all $i_0<i_{1}<\cdots<i_{n}$ in $I$  and
\begin{align*}
(\delta_{0}\upharpoonright F_{ij})(x)=(x,{-}f_{ji}(x))\in F_{i}\times F_{j},\\
~~~~(\delta_{1}\upharpoonright F_{ijk})(x)=(x,-x,{-}f_{ji}(x))\in F_{ik}\times F_{ij}\times F_{jk}.
\end{align*}
For a module $C$, applying the functor $\Hom_R(-,C)$ to the long exact sequence, we get a complex as that above.
For particular use, we give the following easy observations.

\begin{lem}\label{lem: 2.1}\begin{enumerate}
\item[(1)] $\llim^{n}H_i=0$ if and only if the complex is
exact at the $n$-th position.  In particular, $\llim^{1}H_i=0$  if and only if
for every family $(h_{ij})_{i<j}\in\prod_{i_{0}<i_{1}}H_{i_{0}i_{1}}$ satisfying $h_{ik}=h_{ij}+g_{kj}(h_{ij})$ for all $i<j<k\in I$,
there exists a family $(h_i)_{i\in I}\in\prod_{i\in I}H_{i}$ such that $h_i = h_{ij}+ g_{ji}(h_j)$ for all $i<j\in I$.
\item[(2)] $\llim^1 \Hom_R(F_i,C)=0$ if and only if
for every family of morphisms $(g_{ji} : F_i\rightarrow C \mid i<j)$ such that $g_{ki} = g_{ji} + g_{kj} f_{ji}$ for all $i<j<k$ in I,
there exists a family of morphisms $(g_i: F_i \rightarrow C \mid i\in I)$  such that $g_i = g_{ji} +g_j f_{ji}$ for all $i<j$  in I. In particular,  $\llim^{1}\Hom_R(F_i,C)=0$ provided that $\Ext^1_R(M,C)=0$.
 \end{enumerate}
\end{lem}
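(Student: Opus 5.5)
Part (1) is a direct unwinding of the definition of $\llim^1$ as the first cohomology of the displayed cochain complex. The complex is exact at the $n$-th position exactly when $\Ker(\Delta_n)=\im(\Delta_{n-1})$, i.e. when $\llim^n H_i=0$. For $n=1$ I will write out what $(h_{ij})\in\Ker(\Delta_1)$ means componentwise. By the formula for $\Delta_1$, this is the cocycle identity $h_{ik}=h_{ij}+g_{ji}(h_{jk})$ for $i<j<k$, read in the indexing convention of the statement. Similarly, $(h_{ij})\in\im(\Delta_0)$ means there is $(h_i)$ with $h_{ij}=h_i-g_{ji}(h_j)$, i.e. $h_i=h_{ij}+g_{ji}(h_j)$. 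This gives the stated reformulation.

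For part (2), I will apply (1) to the inverse system $H_i=\Hom_R(F_i,C)$ with bonding maps $\Hom_R(f_{ji},C)$, i.e. precomposition with $f_{ji}$. An element $g_{ji}$ of $H_{ij}=H_i$ is a map $F_i\to C$. The cocycle condition then reads $g_{ki}=g_{ji}+g_{kj}f_{ji}$, and the coboundary condition reads $g_i=g_{ji}+g_jf_{ji}$, which is exactly the statement. Alternatively, and more cleanly, I will observe that applying $\Hom_R(-,C)$ to the acyclic complex $\cdots\to\bigoplus F_{i_0i_1}\to\bigoplus F_i\to M\to0$ yields, via $\Hom_R(\bigoplus,C)=\prod\Hom_R(\cdot,C)$, precisely the complex computing $\llim^n\Hom_R(F_i,C)$, with $\Hom_R(M,C)=\llim\Hom_R(F_i,C)$ in degree $-1$.

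For the final claim, the plan is the following. Set $K=\Ker(\bigoplus_i F_i\to M)$, so that $0\to K\to\bigoplus F_i\to M\to0$ is exact. The acyclic complex gives an epimorphism $\delta_0\colon\bigoplus F_{i_0i_1}\to K$, whose kernel is the image of $\delta_1$. A cocycle $(g_{ji})$ is a map $g\colon\bigoplus F_{i_0i_1}\to C$ with $g\delta_1=0$, so it factors through $K$ as some $\bar g\colon K\to C$. If $\Ext^1_R(M,C)=0$, then $\Hom_R(\bigoplus F_i,C)\to\Hom_R(K,C)$ is surjective. Hence $\bar g$ extends to some $(g_i)\colon\bigoplus F_i\to C$, and then $g=(g_i)\circ\delta_0$ is a coboundary.

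The only point needing care is matching the signs and index conventions of $\delta_0,\delta_1$ with those of $\Delta_0,\Delta_1$. I will check these on the generators $F_{ij}$ and $F_{ijk}$, and adjust the sign of $g_i$ if necessary. Otherwise the argument is routine.
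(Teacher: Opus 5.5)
Your proposal is correct and is the definition-unwinding the paper intends. The paper gives no written proof, calling these ``easy observations'' after noting that $\Hom_R(-,C)$ turns the acyclic complex resolving $\lim F_i$ into the complex computing $\llim^n\Hom_R(F_i,C)$. Your argument for the final claim (factor a cocycle through $K=\Ker(\bigoplus_i F_i\to M)$, then extend along $K\hookrightarrow\bigoplus_i F_i$ using $\Ext^1_R(M,C)=0$) correctly supplies the step the paper omits.

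The signs of $\delta_0,\delta_1$ agree exactly with those of $\Delta_0,\Delta_1$, so no sign adjustment is needed. The cocycle identity you derive, $h_{ik}=h_{ij}+g_{ji}(h_{jk})$, is the right one. The formula printed in the statement is a typo, as is the $h_{i_0}$ inside $\Delta_0$, which should be $h_{i_1}$.
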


For a given homomorphism $f: M\rightarrow N$ and a module $C$,
we denote by $f^*=\Hom_{R}(f,C)$ for simplicity.

\vspace{0.2cm}

Let
$\mathcal{M}=(M_\alpha, g_{\beta\alpha} : M_\alpha\rightarrow M_\beta \mid \alpha<\beta\leq \aleph_1)$
be a continuous well-ordered direct system of modules, where {$M_\alpha$ is $<\aleph_1$-presented for each $\alpha<\aleph_1$ and} $\lim_{\alpha<\aleph_1} M_{\alpha}=M_{\aleph_1}=M$.
Let $B$ be a module and apply the functor $\Hom_R(-,B)$ to $\mathcal{M}$.
Let  $I_{\alpha}=\bigcap_{\alpha\leq\beta<\aleph_1}\im(g_{\beta\alpha}^{*})$.
Since $I_{\alpha}\subseteq \Hom_{R}(M_{\alpha},B)$,
we get that ${g_{\beta\alpha}^{*}\upharpoonright}_{I_{\beta}}:I_{\beta}\rightarrow I_{\alpha}$
is epic for each $\alpha\leq\beta<\aleph_1$.
We obtain an inverse system
$(I_{\alpha},{g_{\beta\alpha}^{*}\upharpoonright}_{I_{\beta}}:I_{\beta}\twoheadrightarrow I_{\alpha}\mid \alpha\leq \beta<\aleph_1)$.
For each limit ordinal $\beta<\aleph_1$, taking the inverse limit indexed on the initial section $\beta$,
$${\xymatrix{
0\ar[r]&I_\beta\ar[d]^{\subseteq}\ar[r]^(.40){\subseteq}
&\Hom_{R}(M_{\beta},B)\ar[d]^{=}\\
0\ar[r]&\llim_{\alpha<\beta} I_\alpha\ar[r]^(.40){\subseteq}
& \Hom_{R}(M_{\beta},B).}}$$
Let $J_\beta=I_\beta$ for $\beta$ successional, and $J_\beta=\llim_{\alpha<\beta} I_\alpha$ for $\beta$ is  limit.
Identifying the connecting homomorphisms ${g_{\beta\alpha}^{*}\upharpoonright}_{J_{\beta}}:J_\beta\rightarrow J_\alpha$.
Unambiguously, we denote
${g_{\beta\alpha}^{*}\upharpoonright}_{J_{\beta}}$ by $g_{\beta\alpha}^{*}$.
We get the following  Mittag-Leffler inverse system
$${\xymatrix{\mathcal{J}=(J_{\alpha},g_{\beta\alpha}^{*}:J_{\beta}\ar[r]& J_{\alpha}\mid \alpha\leq \beta<\aleph_1).}}\eqno {(2.1)}$$

With the notation above, for this inverse system (2.1), we have the following lemma.

\begin{lem}\label{lem: 2.2}
Assume $\llim^1 \Hom_R(M_\alpha, B)=0$. Given any
family of morphisms $(\varphi_{\beta\alpha} : M_\alpha\rightarrow B \mid \alpha<\beta)$ satisfying
$\varphi_{\gamma\alpha}=\varphi_{\beta\alpha}+\varphi_{\gamma\beta}g_{\beta\alpha}$ and $\varphi_{\beta\alpha}\in J_\alpha$
for all $\alpha<\beta<\gamma<\aleph_1$. Then
there exists a family of morphisms $(\varphi_\alpha\mid \alpha<\aleph_1)$ with each $\varphi_\alpha\in J_{\alpha}$,
such that $\varphi_\alpha = \varphi_{\beta\alpha} +\varphi_\beta g_{\beta\alpha}$ for all $\alpha<\beta<\aleph_1$.
\end{lem}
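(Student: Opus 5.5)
The plan is to first solve the cocycle problem in the full inverse system $\Hom_R(M_\alpha,B)$, using the hypothesis $\llim^1\Hom_R(M_\alpha,B)=0$, and then correct the solution so that each component lies in $J_\alpha$. The correction uses the Mittag-Leffler property of $\mathcal{J}$ in (2.1): the connecting maps $J_\beta\to J_\alpha$ are surjective, and $\mathcal J$ is continuous at limits by construction.

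\emph{Step 1.} Apply Lemma \ref{lem: 2.1}(1), or equivalently its reformulation (2), to the family $(\varphi_{\beta\alpha})$. The cocycle identity $\varphi_{\gamma\alpha}=\varphi_{\beta\alpha}+\varphi_{\gamma\beta}g_{\beta\alpha}$ is exactly the condition there. So we obtain a family $\psi_\alpha\in\Hom_R(M_\alpha,B)$ with
\[\psi_\alpha=\varphi_{\beta\alpha}+\psi_\beta g_{\beta\alpha}\quad\text{for all }\alpha<\beta<\aleph_1.\]
The $\psi_\alpha$ need not lie in $J_\alpha$.

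\emph{Step 2.} Project onto $J_\alpha$ via the stable images. Since $\varphi_{\beta\alpha}\in J_\alpha$, the relation gives $\psi_\alpha-g_{\beta\alpha}^*(\psi_\beta)\in J_\alpha$ for every $\beta>\alpha$. Hence the class of $\psi_\alpha$ modulo $J_\alpha$ is determined compatibly. I would look for a compatible family $(\theta_\alpha)\in\llim_\alpha \Hom_R(M_\alpha,B)$ with $\psi_\alpha-\theta_\alpha\in J_\alpha$ for all $\alpha$, and then set $\varphi_\alpha=\psi_\alpha-\theta_\alpha$. Compatibility of $\theta$ gives $\varphi_\alpha=\varphi_{\beta\alpha}+\varphi_\beta g_{\beta\alpha}$ immediately.

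To build $\theta$, proceed by transfinite recursion on $\alpha<\aleph_1$, maintaining $\theta_\alpha\equiv\psi_\alpha \pmod{J_\alpha}$ and compatibility with earlier stages.
\begin{itemize}
\item At a successor $\alpha+1$, we need $\theta_{\alpha+1}$ with $g^*(\theta_{\alpha+1})=\theta_\alpha$ and $\theta_{\alpha+1}-\psi_{\alpha+1}\in J_{\alpha+1}$. The difference $\theta_\alpha-g^*(\psi_{\alpha+1})$ lies in $J_\alpha$. Surjectivity of $J_{\alpha+1}\to J_\alpha$ lets us lift it to some $x\in J_{\alpha+1}$, and we put $\theta_{\alpha+1}=\psi_{\alpha+1}+x$.
\item At a limit $\beta$, continuity of $\mathcal M$ gives $\Hom_R(M_\beta,B)=\llim_{\alpha<\beta}\Hom_R(M_\alpha,B)$. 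So $\theta_\beta$ is defined as the limit of the earlier $\theta_\alpha$. The condition $\theta_\beta-\psi_\beta\in J_\beta=\llim_{\alpha<\beta}J_\alpha$ holds because each restriction $\theta_\alpha-g^*_{\beta\alpha}\psi_\beta$ lies in $J_\alpha$.
\end{itemize}
Equivalently, the whole argument can be phrased as: the Mittag-Leffler, continuous-at-limits system $\mathcal J$ of length $\aleph_1$ with surjective maps has $\llim^1\mathcal J=0$, and $\llim^1$ of the quotient system vanishes.

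\emph{Main obstacle.} The delicate point is the meaning of $\alpha<\beta$ in the cocycle data and the successor step. There one must check that the quotient family $(\psi_\alpha+J_\alpha)$ really is compatible, which uses $g^*_{\beta\alpha}(J_\beta)\subseteq J_\alpha$, together with the continuity identification of $J_\beta$ at limits made before (2.1). I would verify these compatibilities first. If the recursion above runs into trouble at limits, the fallback is to instead prove directly, by the same recursion, that every cocycle with values in $\mathcal J$ is a coboundary. That uses only the surjectivity and continuity of $\mathcal J$, not Step 1; the hypothesis $\llim^1\Hom_R(M_\alpha,B)=0$ would then serve only to ensure consistency.
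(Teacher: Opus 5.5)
\textbf{There is a genuine gap: the correction in your Step 2 can fail.} It breaks at the successor step $\lambda\to\lambda+1$ when $\lambda$ is a limit ordinal.

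You assume every connecting map $J_\beta\to J_\alpha$ of $\mathcal J$ is surjective, but that is not available. For limit $\lambda$ you have $J_{\lambda+1}=I_{\lambda+1}$, and its image under $g^{*}_{\lambda+1,\lambda}$ lies in $I_\lambda$. Meanwhile $J_\lambda=\llim_{\alpha<\lambda}I_\alpha$ can be strictly larger than $I_\lambda$. ``Mittag-Leffler'' for $\mathcal J$ only means the images stabilize, not that the maps are onto. The paper itself treats this case: Lemma \ref{lem: 2.3} exists because $g^{*}_{\lambda+1,\lambda}$ may fail to be onto $J_\lambda$ (the set $E$ there), and surjectivity is obtained only on a club.

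So the element $\theta_\lambda-g^{*}_{\lambda+1,\lambda}(\psi_{\lambda+1})\in J_\lambda$ need not lift to $J_{\lambda+1}$. Since $\theta_\lambda$ is forced by continuity from your earlier arbitrary choices, the recursion can get stuck with no way to repair it. Your fallback relies on the same surjectivity. Note that if surjectivity held, $\llim^1\mathcal J=0$ would follow for free and the hypothesis $\llim^1\Hom_R(M_\alpha,B)=0$ would be superfluous; that should have been a warning sign.

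\textbf{The missing observation is that no correction is needed.} Your remark that the $\psi_\alpha$ ``need not lie in $J_\alpha$'' is false: $\theta=0$ works, and this is the paper's route.
\begin{itemize}
\item \emph{Successor $\alpha$.} For any $\beta>\alpha$ you have $\psi_\alpha=\varphi_{\beta\alpha}+g^{*}_{\beta\alpha}(\psi_\beta)$, with $\varphi_{\beta\alpha}\in J_\alpha=I_\alpha\subseteq \im(g^{*}_{\beta\alpha})$. Hence $\psi_\alpha\in\im(g^{*}_{\beta\alpha})$ for every $\beta>\alpha$, that is, $\psi_\alpha\in I_\alpha$. The paper phrases this by subtracting the two cocycle identities to get $\varphi_\alpha-\varphi_{\gamma\alpha}=(\varphi_\beta-\varphi_{\gamma\beta})g_{\beta\alpha}$.
\item \emph{Limit $\alpha$.} For $\alpha'<\alpha$, write $g^{*}_{\alpha\alpha'}(\psi_\alpha)=g^{*}_{\alpha'+1,\alpha'}\bigl(\psi_{\alpha'+1}-\varphi_{\alpha,\alpha'+1}\bigr)$. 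The argument lies in $I_{\alpha'+1}$ by the successor case and the hypothesis $\varphi_{\alpha,\alpha'+1}\in J_{\alpha'+1}=I_{\alpha'+1}$. Since $g^{*}_{\alpha'+1,\alpha'}(I_{\alpha'+1})\subseteq I_{\alpha'}$, you get $\psi_\alpha\in\llim_{\alpha'<\alpha}I_{\alpha'}=J_\alpha$.
\end{itemize}
With this, the compatibility relation is exactly your Step 1, and nothing else is required.
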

\begin{proof}By assumption that $\llim^1 \Hom_R(M_\alpha, B)=0$, for the given compatible family $(\varphi_{\beta\alpha})_{\alpha<\beta}$,
by Lemma \ref{lem: 2.1}(2),
there exists a family of morphisms $(\varphi_\alpha\mid \alpha<\aleph_1)$ with each ${\varphi}_\alpha\in \Hom_R(M_\alpha, B)$,
such that $$\varphi_\alpha = \varphi_{\beta\alpha} +\varphi_\beta g_{\beta\alpha},~~ ~~~~~~~~~\forall~~~\alpha<\beta<\aleph_1.$$
To show that $\varphi_\alpha\in J_{\alpha}$ for each $\alpha<\aleph_1$, we
subtract the relation $\varphi_{\gamma\alpha}=\varphi_{\beta\alpha}+\varphi_{\gamma\beta} g_{\beta\alpha}$ from the equation above,
which yields the equivalent identity
$$\varphi_\alpha -\varphi_{\gamma\alpha} = (\varphi_\beta -\varphi_{\gamma\beta}) g_{\beta\alpha}, ~~~~~~~\forall~~~ \alpha<\beta<\gamma<\aleph_1.$$
It follows that the difference $\varphi_\alpha -\varphi_{\gamma\alpha}$ lies in the intersection of the images of the maps
$g_{\beta\alpha}^{*}$:
$$\varphi_\alpha -\varphi_{\gamma\alpha}\in \bigcap_{\beta>\alpha}\im(g_{\beta\alpha}^{*})\subseteq J_\alpha$$ {for all $\alpha<\beta<\gamma<\aleph_1$}.  As
${\varphi}_{\gamma\alpha}\in J_\alpha$ and $J_\alpha$ is a submodule, the inclusion above forces $\varphi_\alpha\in J_\alpha$.
\end{proof}

Recall from \cite[Definition 6.34]{GTT} that a well ordered inverse system $\mathcal{N}=\{N_\alpha, g_{\alpha\beta}: N_{\beta}\rightarrow N_{\alpha}~|~\alpha<\beta\leq\sigma \}$ is called \emph{continuous} if the connecting maps $g_{\alpha\beta}$ are epimorphisms and
$N_{\theta}= \llim_{\alpha<\theta}N_{\alpha}$ for each  $\theta\leq\sigma$.
Correspondingly, we call a well ordered inverse system $\mathcal{N}=\{N_\alpha, g_{\alpha\beta}: N_{\beta}\rightarrow N_{\alpha}~|~\alpha<\beta<\sigma \}$ \emph{inner-continuous} provided that the connecting maps $g_{\alpha\beta}$ are epimorphisms and
$N_{\theta}= \llim_{\alpha<\theta}N_{\alpha}$ for each {limit ordinal} $\theta<\sigma$.


\begin{lem}\label{lem: 2.3}Set $B=D^{(\aleph_1)}$ and assume $\llim^1 \Hom_R(M_\alpha, B)=0$ {with $M_\alpha$ $<\aleph_1$-presented for each $\alpha<\aleph_1$}.
There exists a closed unbounded subset $X \subseteq \aleph_1$ such that the inverse subsystem of $\mathcal{J}$
indexed on $X$ is inner-continuous.
\end{lem}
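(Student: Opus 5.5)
We need a closed unbounded $X\subseteq\aleph_1$ such that for each limit point $\theta\in X$ of $X$, $J_\theta=\llim_{\alpha\in X\cap\theta}J_\alpha$, with the connecting maps between members of $X$ surjective.

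Surjectivity is automatic. For $\alpha\le\beta$ both successor ordinals, $g^*_{\beta\alpha}\upharpoonright_{I_\beta}:I_\beta\to I_\alpha$ is onto, as noted before (2.1). For limit $\beta$ we have $I_\beta\subseteq J_\beta$, so the image of $J_\beta$ still contains $g^*_{\beta\alpha}(I_\beta)=I_\alpha$. The only issue is whether that image lands inside $I_\alpha$ rather than just inside $J_\alpha$. For $\alpha$ limit, surjectivity onto $J_\alpha$ will follow from the continuity we establish. So the real content is continuity at limits of $X$.

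Cofinal restriction does not change inverse limits, and $J_\theta=\llim_{\alpha<\theta}I_\alpha$ by definition for limit $\theta$. So for any limit $\theta$ that is a limit point of $X$, the identity $J_\theta=\llim_{\alpha\in X\cap\theta}J_\alpha$ reduces to
\[ \llim_{\alpha\in X\cap\theta} J_\alpha=\llim_{\alpha\in X\cap\theta} I_\alpha , \]
that is, to $J_\alpha=I_\alpha$ on a cofinal part of $X\cap\theta$. Taking $X$ to consist of limit ordinals, it therefore suffices to find a club $X$ of limit ordinals $\theta$ with $I_\theta=\llim_{\alpha<\theta}I_\alpha$. Equivalently, the stabilized image $\bigcap_{\beta\ge\theta}\im g^*_{\beta\theta}$ must already contain every compatible thread of the $I_\alpha$, $\alpha<\theta$.

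The plan is to argue by contradiction using Fodor's lemma. Let $E$ be the set of limit $\theta$ with $I_\theta\subsetneq\llim_{\alpha<\theta}I_\alpha$. If $E$ contains no club complement, then $E$ is stationary. For each $\theta\in E$ pick a thread $\varphi^\theta\in\llim_{\alpha<\theta}I_\alpha\setminus I_\theta$ and an ordinal $\beta_\theta>\theta$ with $\varphi^\theta\notin\im g^*_{\beta_\theta\theta}$. By closing off, we may pass to a stationary subset on which $\beta_\theta$ is less than the next element; the set of $\theta$ closed under $\alpha\mapsto\beta_\alpha$ is a club.

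We then use these threads to build a family $(\varphi_{\beta\alpha})$ as in Lemma \ref{lem: 2.2}. This is where $B=D^{(\aleph_1)}$ enters: place the failure at $\theta$ into the $\theta$-th copy of $D$. Since $M_\alpha$ is $<\aleph_1$-presented, $\Hom_R(M_\alpha,D^{(\aleph_1)})$ is controlled by countable supports, so maps into distinct coordinates can be added to form a coherent cocycle $\varphi_{\gamma\alpha}=\varphi_{\beta\alpha}+\varphi_{\gamma\beta}g_{\beta\alpha}$ with values in $J_\alpha$. Lemma \ref{lem: 2.2}, applicable since $\llim^1\Hom_R(M_\alpha,B)=0$, yields a trivialization $(\varphi_\alpha)$ with $\varphi_\alpha\in J_\alpha$. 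Each $\varphi_\alpha$ has countable support, and $\alpha\mapsto\sup(\mathrm{supp}\,\varphi_\alpha)$ is regressive on a stationary set. By Fodor's lemma it is bounded by some $\delta$ on a stationary set. For $\theta>\delta$ in that set, projecting to the $\theta$-th coordinate then exhibits $\varphi^\theta$ as lying in $\im g^*_{\beta_\theta\theta}$, a contradiction. Thus $E$ is nonstationary, and the complement of $E$ intersected with the club of limit ordinals gives the desired $X$.

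The main obstacle is the encoding step. One must define the cocycle $(\varphi_{\beta\alpha})$ from the chosen threads so that it is genuinely coherent and lies in $J_\alpha$, and so that, after Lemma \ref{lem: 2.2}, the $\theta$-th coordinate of a trivialization really forces $\varphi^\theta$ into the stabilized image. I would first try $\varphi_{\beta\alpha}=\sum_{\alpha<\theta\le\beta,\ \theta\in E'}\iota_\theta\circ\varphi^\theta_\alpha$ for $\alpha<\theta$, where $\iota_\theta$ is the $\theta$-th inclusion. I would then check coherence directly from the thread compatibility of each $\varphi^\theta$, and handle countability of the sum using $<\aleph_1$-presentedness.
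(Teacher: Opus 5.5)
Your reduction is sound: on a club of limit ordinals it suffices to have $I_\theta=\llim_{\alpha<\theta}I_\alpha$. In fact the limit identity for $\mathcal{J}$ along a cofinal $X$ holds automatically from the definition of $J_\theta$. Your overall architecture also matches the paper's: a stationary set of failures, each placed in its own coordinate of $B^{(\aleph_1)}$, trivialized by Lemma~\ref{lem: 2.2}, then projected.

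\textbf{The encoding step.} This is where the proof breaks, and you rightly flag it as the crux. The closed formula $\varphi_{\beta\alpha}=\sum_{\theta\in E'\cap(\alpha,\beta]}\iota_\theta\varphi^\theta g_{\theta\alpha}$ is formally a cocycle: it is the coboundary of the product-valued family $\psi_\alpha=\sum_{\theta>\alpha}\iota_\theta\varphi^\theta g_{\theta\alpha}$. However, it does not take values in the direct sum.
\begin{itemize}
\item The map $\theta\mapsto\min\{\alpha:\varphi^\theta g_{\theta\alpha}\neq 0\}$ is regressive on $E'$, since $\varphi^\theta\neq 0$ and $M_\theta=\lim_{\alpha<\theta}M_\alpha$.
\item By Fodor it is constant, say $\alpha_0$, on a stationary set.
\item Since $M_{\alpha_0}$ is countably generated, uncountably many of the maps $\varphi^\theta g_{\theta\alpha_0}$ are nonzero on one fixed generator $x$.
\item Taking $\beta<\aleph_1$ to be the supremum of countably many such $\theta$, the element $\varphi_{\beta\alpha_0}(x)$ has infinitely many nonzero coordinates.
\end{itemize}
This happens for every choice of the $\varphi^\theta$. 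So $<\aleph_1$-presentedness cannot rescue it: it bounds the support of one homomorphism, not of a sum of countably many. Consequently Lemma~\ref{lem: 2.2} and the hypothesis $\llim^1\Hom_R(M_\alpha,B)=0$ do not apply to your family.

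\textbf{The missing idea.} The paper builds the cocycle by transfinite recursion:
\begin{itemize}
\item It encodes the failure at a successor step, $\varphi_{\lambda+1,\lambda}=\iota_\lambda h_\lambda$, with $h_\lambda\in J_\lambda$ not factoring through $g_{\lambda+1,\lambda}$. This is what your choice of $\beta_\theta$ and the closing-off are aiming at.
\item It passes from $\delta$ to $\delta+1$ by the finite modification $\varphi_{\delta+1,\alpha}=\varphi_{\delta\alpha}+\iota_\delta h_\delta g_{\delta\alpha}$.
\item At a countable limit $\gamma$, it takes $\varphi_{\gamma\alpha}$ to be a trivialization, with values in $B^{(\gamma)}$, of the cocycle already built below $\gamma$.
\end{itemize}
That limit step needs vanishing of $\llim^1$ over countable segments, which the paper draws from the Mittag-Leffler system $\mathcal{J}$. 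This ingredient is absent from your plan.

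\textbf{The final Fodor step.} This is misapplied as well. Nothing makes $\alpha\mapsto\sup(\operatorname{supp}\varphi_\alpha)$ regressive. A trivialization is only determined up to adding a thread $(tf_\alpha)_\alpha$ with $t\in\Hom_R(M,B^{(\aleph_1)})$, and such supports need not be bounded below $\alpha$.

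What countable supports actually give is the club $Z$ of closure points: $\operatorname{supp}\varphi_\alpha\subseteq\lambda$ for all $\alpha<\lambda\in Z$ (Remark~\ref{rem: 1.7}). For a limit $\lambda\in Z\cap E$, continuity of $\mathcal{M}$ at $\lambda$ together with $\pi_\lambda\varphi_{\lambda\alpha}=0$ gives $\pi_\lambda\varphi_\lambda=0$. Projecting $\varphi_\lambda=\varphi_{\lambda+1,\lambda}+\varphi_{\lambda+1}g_{\lambda+1,\lambda}$ then shows $h_\lambda$ factors through $g_{\lambda+1,\lambda}$, a contradiction.

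This is also why the failure must sit in an explicitly known coordinate of $\varphi_{\lambda+1,\lambda}$. In your closed form $\pi_\theta\varphi_{\beta\theta}=0$, so projecting the relation at $\theta$ itself yields no information about $\varphi^\theta$.
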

\begin{proof}Based on the construction of the inverse system $\mathcal{J}$ in (2.1), we may
restrict to the subsystem indexed on the closed unbounded subset of $\aleph_1$ consisting of limit ordinals in $\aleph_1$.
 Without loss of generality,  we may assume that whenever $g_{\beta\alpha}^{*}$ is not epic for $\alpha < \beta$,
then $g_{\alpha+1,\alpha}^{*}$ is already  not epic.
		
Suppose, for the sake of contradiction, that the set\[E = \{\alpha < {\aleph_1} \mid g_{\alpha+1,\alpha}^{*} \text{ is not  epimorphic}\}\]
is stationary in $\aleph_1$; that is, it intersects every closed unbounded subset of $\aleph_1$.
Fix $h_\alpha \in J_\alpha\subseteq\operatorname{Hom}_R(M_\alpha, B)$ such that $h_\alpha$
does not factorize through $g_{\alpha+1,\alpha}$ for each $\alpha \in E$. Define $h_\alpha = 0$ for $\alpha \in {\aleph_1} \setminus E$.
		
We will inductively construct homomorphisms $\varphi_{\beta\alpha} : M_\alpha \rightarrow B^{(\beta)}$ for all $\alpha < \beta \leq \aleph_1$ such that
$\varphi_{\alpha+1,\alpha}$ is the composition of $h_\alpha$ with the $\alpha$-th canonical inclusion $B\rightarrow B^{(\alpha+1)}$, and
$\varphi_{\gamma\alpha} = \varphi_{\beta\alpha} + \varphi_{\gamma\beta} g_{\beta\alpha}$ for all $\alpha < \beta < \gamma \leq\aleph_1$.

For $\beta =1$, put $\varphi_{10} = h_0$.  Assume
$\varphi_{\beta\alpha}$ has been constructed for all $\alpha < \beta < \gamma$ for some $\gamma \leq \theta$.
Suppose $\gamma$ is a successor, say $\gamma = \delta + 1$.
Set $\varphi_{\gamma\delta} = h_\delta$ and {$\varphi_{\gamma\alpha} = \varphi_{\delta\alpha} + \iota_\delta h_\delta g_{\delta\alpha}$} for each $\alpha < \delta$.
Suppose $\gamma<\aleph_1$ is a limit ordinal, then
$\llim^1_{\gamma<\alpha}J_\gamma=0,~ \alpha<\aleph_1$.  Thus, we may
define $\varphi_{\gamma\alpha}$ as the maps $\varphi_\alpha$ given by Lemma \ref{lem: 2.1} (1). For the case
$\gamma=\aleph_1$, given the collection  maps $(\varphi_{\beta\alpha})_{\alpha<\beta<\aleph_1}$  already constructed,
and note the assumption that $\llim^1 \Hom_R({M}_\alpha,D^{(\aleph_1)})=0$.
By Lemma \ref{lem: 2.2},
there exists a family of morphisms $(\varphi_\alpha\mid \alpha<\aleph_1)$ with each $\varphi_\alpha\in J_{\alpha}$,
such that $\varphi_\alpha = \varphi_{\beta\alpha} +\varphi_\beta g_{\beta\alpha}$ for all $\alpha<\beta<\aleph_1$.
Hence,
we define $\varphi_{\aleph_1\alpha}$ as the maps $\varphi_\alpha$.

Next, employing the same technique as in the proof of \cite[Theorem 2.9]{SS2},
we put
$${\xymatrix{Z = \{\lambda < \aleph_1 \mid \operatorname{im} (\varphi_{\aleph_1\alpha}) \subseteq B^{(\lambda)} \text{ for any } \alpha < \lambda<\aleph_1\}.}}\eqno {(2.2)}$$
It is straightforward to verify that $Z$ is closed unbounded in $\aleph_1$ (see Remark \ref{rem: 1.7} below).
Consequently, the set $Z^c$ consisting of
the limit ordinals in $Z$ is also closed unbounded. Therefore,  there exists some $\lambda \in Z^c \cap E$.
For $\lambda<\aleph_1$,	denote by $\pi$ the $\lambda$-th canonical projection $B^{(\aleph_1)} \rightarrow B$.
First, we show that $\pi \varphi_{\aleph_1\lambda} = 0$. Choose an arbitrary $x \in {M_\lambda}$.
Since {$M_\lambda = \lim_{\alpha < \lambda} M_\alpha$} by continuity of the direct system, there exists
$\alpha < \lambda$ and $y \in {M_\alpha}$ such that $x = g_{\lambda\alpha}(y)$. We have the equality:
	$$\pi \varphi_{\aleph_1\alpha}(y) = \pi \varphi_{\lambda\alpha}(y) + \pi \varphi_{\aleph_1\lambda} g_{\lambda\alpha}(y).$$		
However, $\pi \varphi_{\aleph_1\alpha}(y) = 0$ since $\lambda \in Z$ and $\pi \varphi_{\lambda\alpha}(y) = 0$ by definition of $\varphi_{\lambda\alpha}$.
Hence $0 = \pi \varphi_{\aleph_1\lambda} g_{\lambda\alpha}(y) = \pi \varphi_{\theta\lambda}(x)$. The claim follows since $x \in {M_\lambda}$ was arbitrary.	
	On the other hand, we know that $\varphi_{\aleph_1\lambda} = \varphi_{\lambda+1,\lambda} + \varphi_{\aleph_1,\lambda+1} g_{\lambda+1,\lambda}$. Composing this with $\pi$, we obtain:
	$$0 = \pi \varphi_{\aleph_1\lambda} = \pi \varphi_{\lambda+1,\lambda} + \pi \varphi_{\aleph_1,\lambda+1} g_{\lambda+1,\lambda} =h_\lambda + \pi \varphi_{\aleph_1,\lambda+1}g_{\lambda+1,\lambda}.$$	
	But this implies that $h_\lambda$ factorizes through $g_{\lambda+1,\lambda}$, a contradiction to the choice of $h_\lambda$.	
	Therefore, $E$ is not stationary. Consequently, we can choose a closed unbounded subset $X \subseteq \theta$ such that $X \cap E = \emptyset$
and the inverse subsystem of $\mathcal{J}$ indexed on $X$ is continuous.
\end{proof}

\begin{rem}\label{rem: 1.7}
In set theory, an unbounded closed subset of a regular cardinal--commonly referred to as a \textup{club}-- trivially
 satisfies the required property for the set  $Z$  defined in \textup{(2.2)}.
Nevertheless, for the reader convenience, we sketch the brief verification.

{Let $\theta$ be a given regular cardinal.} In the definition of $X$,
the symbol $\lambda= \{\alpha| \alpha<\lambda\}$ denotes an ordinal number below $\theta$, and is not necessarily
a cardinal. Since each $M_\alpha$ ($\alpha<\theta$)
 is $<\theta$-generated and $\theta$ is regular, for each $\delta<\theta$ there exists some
$\delta\leq\delta'<\theta$  such that $\im g_{\theta\alpha}\subseteq D^{(\delta')}$ for all $\alpha<\delta$, where
$g_{\theta\alpha}:M_\alpha\rightarrow D^{(\theta)}$.
Now, starting from an arbitrary $\lambda_0<\theta$, define recursively $\lambda_{n+1}=\lambda'_{n}$ for $n <\omega\}$.
Setting $\lambda=\textup{sup}\{\lambda_n | n <\omega\} < \theta$,
the regularity of $\theta$ ensures that $\lambda<\theta$. We claim that
$\lambda\in X$. Indeed, given an $\alpha<\lambda$, we find an $n<\omega$ such that $\alpha<\lambda_n$, whence
$\im g_{\theta\alpha}\subseteq D^{(\lambda_{n+1})}\subseteq D^{(\lambda)}$. This shows that $X$ is unbounded in $\theta$,
and the closedness is equally straightforward
\end{rem}

Next, we require the following two technical constructions concerning direct systems.

\begin{con}\rm{(}\cite[Page 37]{SS2}\rm{)}\label{con: A} Let $f: M \rightarrow N$ be a homomorphism of modules, $\kappa$ a regular
uncountable cardinal, $\mathcal{M}=(M_{i}, f_{ji}: M_{i}\rightarrow M_{j}\mid i<j\in I)$
and $\mathcal{N}=(N_{i}, g_{ji}:~ N_{i}\rightarrow N_{j}\mid~ i<j\in J)$
$\kappa$-continuous direct systems of $<\kappa$-presented modules
such that ${\lim} \mathcal{M}=M$ and
${\lim} \mathcal{N}=N$. Then there is a $\kappa$-continuous direct system
$\mathcal{U}=(u_{k}:M_{i_{k}}\rightarrow N_{j_{k}}, (f_{i_{l},i_{k}} , g_{j_{l},j_{k}})\mid k < l \in K)$ consisting of morphisms with
domains in $\mathcal{M}$ and codomains in $\mathcal{N}$ such that ${\lim} \mathcal{U}=f$.
Subsequently, there is a $ \kappa$-continuous direct system $\mathcal{K}=(\coker(u_{k}) \mid k \in K)$
consisting of $<\kappa $-presented modules (with canonically defined maps).
\end{con}

\begin{con}\rm{(}\cite[Page 38]{SS2}\rm{)}\label{con: B}
Let $M\in R$-\textup{Mod}, $\kappa$ be a regular uncountable cardinal, $\gamma<\kappa$ and,
for each $\alpha\leq\gamma$, let
$\mathcal{M}^{\alpha}=(M^{\alpha}_{ji}, f^{\alpha}_{ji}:M^{\alpha}_{i}\rightarrow M^{\alpha}_{j} \mid i < j \in I_{\alpha})$
be a $\kappa$-continuous direct system consisting of $<\kappa$-presented modules such that
${\lim} \mathcal{M}^{\alpha}=M$.
Then the systems $\mathcal{M}^{\alpha}$, $\alpha\leq\gamma$, have a common cofinal $\kappa$-continuous subsystem.
More precisely: for each $\alpha\leq\gamma$, there exists a $\kappa$-continuous cofinal direct subsystem
$\mathcal{N}^{\alpha}=(M^{\alpha}_{ji}, f^{\alpha}_{ji}:M^{\alpha}_{i}\rightarrow M^{\alpha}_{j} \mid i < j \in J_{\alpha})$
of $\mathcal{M}^{\alpha}$; furthermore, for any $\alpha,~\beta\leq\gamma$,
there is a bijection $\iota: J_{\alpha}\rightarrow J_{\beta}$ and a direct system $\mathcal{U}$ with
${\lim} \mathcal{U}^{\alpha}=\textup{id}_{M}$ whose objects are isomorphisms
$u_{i}:M^{\alpha}_{i}\rightarrow M^{\beta}_{\iota_{i}}$, $i\in J_{\alpha}$,
and for each $i < j \in J_{\alpha}$, there is only one morphism from $u_{i}$ to $u_{j}$ in $\mathcal{U}$,
namely $( f^{\alpha}_{ji}, f^{\beta}_{\iota(j),\iota(i)})$.
\end{con}

\vspace{0.2cm}

Utilizing the two constructions above,
we construct a \emph{decomposition} of a given short exact sequence.
\begin{con}\rm{\label{con: C}
Let $\kappa$ be an uncountable regular cardinal and let $M$ be a  module and fit into
an exact sequence
$${\xymatrix{0\ar[r]&N\ar[r]^{u}&P\ar[r]^{v}&M\ar[r]&0}}\eqno {(2.3)}$$
which is exact after applying the functor $\Hom_{R}(-,B)$ for a module $B$.
By \cite[Lemma 1.1]{SS1}, there exist two $\kappa$-continuous
directed systems $\mathcal{M}=(M_{i},~f_{ji}\mid i\leq j\in X_{1})$
and $\mathcal{N}=(N_{i},~g_{ji}\mid i\leq j\in X_{2})$ consisting
of $<\kappa$-presented modules, such that
$M=\lim \mathcal{M}$ and $N=\lim \mathcal{N}$.
By Kaplansky's theorem, $P$ is the direct sum
of countably generated projective modules. Let $\mathcal{H}=(P_{i},~\epsilon_{ji}\mid i\leq j\in X_{3})$
be the directed system consisting of countably generated direct summands of
$P$ and inclusions. It obvious that  $P$ is the directed union of these submodules in $\mathcal{H}$.

We get a $\kappa$-continuous directed system
$\mathcal{W}=(N_{i}\buildrel {u_{i}}\over\rightarrow P_{i}\buildrel {v_{i}} \over\rightarrow M_{i}\rightarrow 0,~~g_{ji},e_{ji},f_{ji}\mid i<j\in X)$
such that
$\lim\mathcal{W}=(0\rightarrow N \buildrel {u }\over\rightarrow P \buildrel {v } \over\rightarrow M \rightarrow 0,~~g_{ i},e_{ i},f_{ i}\mid i \in X).$
Applying $\Hom_{R}(-,B)$ to the exact sequences
in $\mathcal{W}$ yields
the following inverse system $\mathcal{W}^{*}$ of exact sequences:
$$(0\rightarrow \Hom_{R}(M_{i},B)\buildrel{v_{i}^{*}}\over\rightarrow \Hom_{R}(P_i,B)
\buildrel {u_{i}^{*}}\over\rightarrow\Hom_{R}(N_i,B), f_{ji}^{*},e_{ji}^{*},g_{ji}^{*}\mid i\leq j\in X).$$
We consider the inverse limit, denoted by $\llim\mathcal{W}^{*}$:
$$(0\rightarrow \Hom_{R}(M,B)\buildrel {v^*}\over\rightarrow \Hom_{R}(P,B)
\buildrel {u^*}\over\rightarrow \Hom_{R}(N,B)\rightarrow 0, f^*_{i},e^*_{i},g^*_{i}\mid i\in X).\eqno {(2.4)}$$
Let  $L_{i}=\im(u_{i}^{*})$, $i\in X$.
Since $L_{i}\subseteq \Hom_{R}(N_{i},B)$), unambiguously, we denote
${g_{ji}^{*}\upharpoonright}_{I_{j}}:L_{j}\rightarrow L_{i}$ by $g_{ji}^{*}$. With these notation,
we have the following inverse system
$$(0\rightarrow \Hom_{R}(M_{i},B)\buildrel{v_{i}^{*}}\over\rightarrow \Hom_{R}(P_i,B)
\buildrel {u_{i}^{*}}\over\rightarrow L_i\rightarrow 0), f_{ji}^{*},
e_{ji}^{*},g_{ji}^{*}\mid i\leq j\in X).\eqno {(2.5)}$$
with inverse limit $\llim\mathcal{W}^{*}$.  The homomorphisms
$g_{ji}^{*}$, $i<j\in X$, are epimorphic since $u^*$ is epimorphic.\rm }
\end{con}
Under the above constructions and notations, we have the following lemma.

\begin{lem}\label{lem: 2.4}Let $M$  be a strongly $\aleph_1$-presented module  and $\Ext^{\geq 1}_R(M, D^{(\aleph_1)})=0$ for a given module $D$.
Let $\mathcal{M}=(M_{\alpha},~f_{\beta\alpha}\mid \alpha\leq \beta <\aleph_1)$ be a continuous direct system with {with $M_\alpha$ $<\aleph_1$-presented for each $\alpha<\aleph_1$ and} direct
limit $\lim\mathcal{M}=(M, f_\alpha \mid \alpha<\aleph_1)$.
There is a closed unbounded subset $S\subseteq \aleph_1$ such that
$f_{\beta\alpha}$ is $ D^{(\aleph_1)}$-injective for all $\alpha, \beta \in S$, $\alpha < \beta$.
\end{lem}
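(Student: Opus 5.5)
We read ``$f_{\beta\alpha}$ is $B$-injective'', with $B=D^{(\aleph_1)}$, as: $f_{\beta\alpha}$ is monic and $f_{\beta\alpha}^{*}=\Hom_R(f_{\beta\alpha},B)$ is epic. That is, every map $M_\alpha\to B$ extends along $f_{\beta\alpha}$. We would first use that $M$ is strongly $\aleph_1$-presented to replace $\mathcal{M}$ on a club by a filtration-type system. Concretely, we pass to a closed unbounded subset of $\aleph_1$ on which every $f_{\beta\alpha}$ is monic with $<\aleph_1$-presented cokernel. This is the standard argument: two $\aleph_1$-continuous systems with the same colimit agree on a club, as in Construction \ref{con: B}. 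The remaining task is surjectivity of $f_{\beta\alpha}^{*}$ on a further club.

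The second step puts us in the setting of Lemmas \ref{lem: 2.2} and \ref{lem: 2.3}.
\begin{enumerate}
\item[(a)] Take a projective presentation $0\to N\to P\to M\to 0$ and decompose it as in Construction \ref{con: C} along $\mathcal{M}$, restricted to a club.
\item[(b)] Since $\Ext^1_R(M,B)=0$, the sequence (2.3) stays exact under $\Hom_R(-,B)$, so Construction \ref{con: C} applies.
\item[(c)] By Lemma \ref{lem: 2.1}(2), the same vanishing gives $\llim^1\Hom_R(M_\alpha,B)=0$.
\item[(d)] Lemma \ref{lem: 2.3} then yields a club $X$ on which the system $\mathcal{J}$ of (2.1) is inner-continuous. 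In particular $g^{*}_{\beta\alpha}\colon J_\beta\to J_\alpha$ is epic for $\alpha<\beta$ in $X$.
\end{enumerate}
Using $\Ext^{2}_R(M,B)=0$ as well (so the hypothesis $\Ext^{\ge 1}$ is genuinely used), we would run the same argument on the first syzygy, i.e.\ on the system $(N_i)$ and the epimorphic system $(L_i)$ of (2.5). This identifies $J_\alpha$ with $\im(f_\alpha^{*})$, the strict stationary image.

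The third and decisive step is to show $J_\alpha=\Hom_R(M_\alpha,B)$ for $\alpha$ in a club, i.e.\ that the images stabilize immediately. We argue by contradiction, mimicking the proof of Lemma \ref{lem: 2.3}.
\begin{enumerate}
\item[(a)] Suppose the set $E'$ of $\alpha$ for which some $h_\alpha\in\Hom_R(M_\alpha,B)$ does not extend along $f_{\alpha+1,\alpha}$ is stationary.
\item[(b)] Build the coherent family $\varphi_{\beta\alpha}\colon M_\alpha\to B^{(\beta)}$ from the $h_\alpha$, placing $h_\alpha$ in the $\alpha$-th coordinate.
\item[(c)] Glue the family at $\aleph_1$ using $\llim^1\Hom_R(M_\alpha,B)=0$, via Lemma \ref{lem: 2.2} or Lemma \ref{lem: 2.1}(2).
\item[(d)] Intersect the club $Z$ of (2.2) with $E'$ to find a limit ordinal $\lambda$.
\item[(e)] Project onto the $\lambda$-th coordinate. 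This shows $h_\lambda$ extends to $M_{\lambda+1}$, a contradiction.
\end{enumerate}
Hence on a club $S\subseteq X$ every $f_{\alpha+1,\alpha}^{*}$ is epic. The surjectivity of $f_{\beta\alpha}^{*}$ for arbitrary $\alpha<\beta$ in $S$ then follows by transfinite induction on $\beta$. Successor steps compose epimorphisms. At limit steps we use inner-continuity from Lemma \ref{lem: 2.3}, i.e.\ $\Hom_R(M_\beta,B)=\llim_{\alpha<\beta}\Hom_R(M_\alpha,B)$ with surjective bonding maps, so any map on $M_\alpha$ lifts step by step up to $\beta$.

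We expect the main obstacle to be the limit stage of the construction of the $\varphi_{\beta\alpha}$ in the third step. There one needs a countable $\llim^1$ to vanish for maps into $B^{(\gamma)}$ with $\gamma<\aleph_1$, and for the full $\Hom$-groups rather than only for the stable parts $J_\alpha$. We would first try to obtain this from the Mittag-Leffler property of countable systems, which holds because the $M_\alpha$ are countably presented and $\Ext^1$ vanishes, together with Lemma \ref{ML}. If that fails, we would obtain it from the identification $J_\alpha=\im f^{*}_\alpha$ established in the second step, which forces the relevant countable systems to have epimorphic bonding maps.
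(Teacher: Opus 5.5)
There is a genuine gap, and it is the one you flag yourself: the limit stages $\gamma<\aleph_1$ of the recursion in your third step. To extend the cocycle $(\varphi_{\beta\alpha})_{\alpha<\beta<\gamma}$ you need $\llim^1_{\alpha<\gamma}\Hom_R(M_\alpha,B^{(\gamma)})=0$ for the full Hom-groups, and neither of your fallbacks supplies it.

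Route (i) has nothing to stand on. The hypothesis gives $\Ext^1_R(M,B)=0$ only for the colimit. Lemma \ref{lem: 2.1}(2), applied to $M_\gamma=\lim_{\alpha<\gamma}M_\alpha$, would instead need $\Ext^1_R(M_\gamma,B)=0$. Lemma \ref{ML} requires a class closed under products and direct limits, so invoking it amounts to assuming $M\in{}^\perp\overline{B}$. That is Proposition \ref{prop: 2.4}, which is itself deduced from the present lemma. Moreover, strict Mittag-Leffler refers to finitely presented approximations of $M$, not to the countable systems $(M_\alpha)_{\alpha<\gamma}$.

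Route (ii) fails because epimorphic bonding maps on the $J_\alpha$ only let you solve cocycles whose components lie in the $J_\alpha$. Your $h_\alpha$ are chosen outside $\im f^*_{\alpha+1,\alpha}$, and on your club this set contains $\im f^*_\alpha=J_\alpha$.

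What you are missing is the paper's use of the syzygy to produce exactly this vanishing. You already run Lemma \ref{lem: 2.3} on $(N_\alpha)$, legitimately since $\Ext^1_R(N,B)\cong\Ext^2_R(M,B)=0$, but you use it only to identify stable images. The paper instead takes the resulting continuous system $J_\alpha\subseteq\Hom_R(N_\alpha,B)$, whose limit is $\Hom_R(N,B)$, and uses the rows $0\to\Hom_R(M_\alpha,B)\to\Hom_R(P_\alpha,B)\to J_\alpha\to 0$ of (2.6). These rows are exact: continuity makes $J_\alpha$ the image of $\Hom_R(N,B)$, and that image is $\im u_\alpha^*$ because $\Ext^1_R(M,B)=0$ and $P_\alpha$ is a summand of $P$. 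The middle system has split epimorphic bonding maps. So for every limit $\beta\le\aleph_1$ (after re-indexing along the club), $\llim^1_{\alpha<\beta}\Hom_R(P_\alpha,B)=0$ and $\Hom_R(P_\beta,B)\to J_\beta=\llim_{\alpha<\beta}J_\alpha$ is onto. The $\llim$--$\llim^1$ exact sequence then gives $\llim^1_{\alpha<\beta}\Hom_R(M_\alpha,B)=0$. Since $B=D^{(\aleph_1)}$ gives $B^{(\gamma)}\cong B$ for $0<\gamma\le\aleph_1$, this is precisely what your recursion needs. With it, your third step becomes the stationary-set argument that the paper's last sentence leaves implicit.

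There are also two smaller problems.

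\emph{Monicity.} Drop it from your reading of $B$-injective and abandon your first step; $B$-injective must mean only that $\Hom_R(f_{\beta\alpha},B)$ is onto. With monicity included, the statement is false. Over the ring of Example \ref{ex:poly} take $D=0$ and $M=k$, which is strongly $\aleph_1$-presented because $\mathfrak{m}\cong k^{(\aleph_1)}$. Then $M_\alpha=R/\sum_{i<\alpha}R\bar{x_i}$ is a continuous system of countably presented modules with colimit $k$, and none of its bonding maps is injective. Strong $\aleph_1$-presentation does not give an $\aleph_1$-filtration by countably presented submodules.

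\emph{Closing induction.} Surjectivity of $f^*_{\alpha+1,\alpha}$ for $\alpha\in S$ does not yield surjectivity of $f^*_{\beta\alpha}$ for $\alpha<\beta$ in $S$. The map $f_{\beta\alpha}$ also factors through $f_{\delta+1,\delta}$ with $\delta\notin S$. Instead, let $E'$ be the set of $\alpha$ such that $f^*_{\beta\alpha}$ is not onto for some $\beta>\alpha$. Then re-index along a club closed under a choice of such witnesses $\beta$, which is what the ``without loss of generality'' in the proof of Lemma \ref{lem: 2.3} is meant to do. A club disjoint from $E'$ then gives the conclusion directly.
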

\begin{proof} Put $B=D^{(\aleph_1)}$.
By the preceding constructions, there exists a strongly $\aleph_1$-presented module $N$ such that
$M$ fits into the exact sequence (2.3).
Consequently, we obtain the inverse systems (2.5) indexed on $\aleph_1$ with limit (2.4) and
$$\mathcal{N}=(N_{\alpha},~g_{\beta\alpha}\mid \alpha\leq \beta <\aleph_1)~~~~ \textup{such that}~~~~
\lim\mathcal{N}=(N, g_\alpha \mid \alpha<\aleph_1).$$
By assumption, we have $\Ext^{1}_R(N, B)=0$ and thus $\llim^1 \Hom_R(N_\alpha, B)=0$. By the constructions of the inverse systems
(2.1) and (2.5), we get that $L_\alpha\buildrel{\subseteq}\over\rightarrow J_\alpha\buildrel{\subseteq}\over\rightarrow\Hom_R(N_\alpha, B)$.
Note that the transition maps are ${g_{\beta\alpha}^{*}\upharpoonright}_{J_{\beta}}:J_\beta\rightarrow J_\alpha$ and
${g_{\beta\alpha}^{*}\upharpoonright}_{L_{\beta}}:L_\beta\rightarrow L_\alpha$, for each $\alpha\leq \beta <\aleph_1$.
Taking the inverse limits, we get that {$\llim J_\alpha=\Hom_R(N, B)$}.
Let $J_{\aleph_1}=\Hom_R(N, B)$, by Lemma \ref{lem: 2.3}, we get a continuous inverse system
$$\mathcal{J}'=(J_{\alpha},g_{\beta\alpha}^{*}:J_{\beta}\rightarrow J_{\alpha}\mid \alpha\leq \beta\leq\aleph_1)~~~~ \textup{such that}~~~~
\lim\mathcal{J}'=(\Hom_R(N, B), g^*_\alpha \mid \alpha<\aleph_1).$$
Combining $\mathcal{J}'$, (2.4) and (2.5), we may form the following inverse system of short exact sequences:
$${\xymatrix{0\ar[r]&\Hom_{R}(M,B)\ar[d]\ar[r]^{v^{*}}&\Hom_{R}(P,B)\ar[d]
\ar@{->>}[r]^{u^{*}}&\Hom_{R}(N,B)\ar@{->>}[d]^{g^{*}_{\beta}}\ar[r]&0\\
0\ar[r]&\Hom_{R}(M_{\beta},B)\ar[d]^{f^{*}_{\beta\alpha}}\ar[r]^{v_{\beta}^{*}}
&\Hom_{R}(P_{\beta},B)\ar[d]\ar[r]^(.70){ u_{\beta}^{*}}&J_\beta\ar@{->>}[d]^{g^{*}_{\beta\alpha}}\ar[r]&0\\
0\ar[r]&\Hom_{R}(M_{\alpha},B)\ar[r]^{ v_{\alpha}^{*}}
& \Hom_{R}(P_{\alpha},B)\ar[r]^(.70){ u_{\alpha}^{*}}
&J_\alpha\ar[r]&0.}}\eqno {\raisebox{-8ex}{$(2.6)$}}$$

For each  limit ordinal $\beta\leq\aleph_1$, taking the inverse limit on the subsystem indexed on the initial $\beta$ segment, we have
the following exact sequence
$$0\rightarrow \llim_{\alpha<\beta}\Hom_{R}(M_{\alpha},B)\rightarrow \llim_{\alpha<\beta}\Hom_{R}(P_\alpha,B)\buildrel{u^{*}}\over\rightarrow$$
$$\rightarrow \llim_{\alpha<\beta} J_\alpha\rightarrow
\llim_{\alpha<\beta}^1\Hom_{R}(M_{\alpha},B)\rightarrow \llim_{\alpha<\beta}^1\Hom_{R}(P_\alpha,B)\rightarrow \cdots.$$
By  the continuity of $\mathcal{J}'$,  $\llim_{\alpha<\beta} J_\alpha=J_\beta$.
Since $\llim_{\alpha<\beta}^1\Hom_{R}(P_\alpha,B)=0$,
we get that $\llim_{\alpha<\beta}^1\Hom_{R}(M_{\alpha},B)=0$ for every $\beta\leq \aleph_1$ as $u^{*}$ is an epimorphism.

Combining Lemma \ref{lem: 2.1} (2), and adopting an approach  similar to that in the proof of  Lemma \ref{lem: 2.2},
we can choose a closed unbounded subset $S \subseteq \aleph_1$ such that
and $f_{\beta\alpha}$ is $D^{(\aleph_1)}$-injective for all $\alpha, \beta \in S$, $\alpha < \beta$.
\end{proof}

We say that a  class $\mathcal{D}\subseteq$ $R$-$\textup{Mod}$ is a \emph{definable} \cite{HD,SS1} if
$\mathcal{D}$ is closed under products, direct limits and pure submodules.
For a class $\mathcal{C}\subseteq \textup{Mod}$-$R$, we denote  by $\overline{\mathcal{C}}$
the definable closure of the class $\mathcal{C}$, that is, the smallest definable class containing $\mathcal{C}$, and
$\textup{Cogen}_{*}(\mathcal{C})$ the class of
all pure submodules of products of modules from $\mathcal{C}$.
If $\mathcal{C}=\{C\}$, we write just $\textup{Cogen}_{*}(C)$, $\overline{C}$, respectively.
It is easy to see that $\overline{\mathcal{P}}=\overline{R}$.

We have the main result of this section, and we will give some applications in the following sections.

\begin{prop}\label{prop: 2.4} Let $M$  be a strongly $\aleph_1$-presented module  and $\Ext^{\geq 1}_R(M, D^{(\aleph_1)})=0$.
Then $M\in {^\perp \overline{D}}$, where $\overline{D}$ is the definable closure  of $D$.
\end{prop}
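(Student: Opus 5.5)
The plan is to reduce the statement to Ext$^1$ via syzygies, then to a Mittag-Leffler (strict stationarity) statement about a single module, and finally to show that this property passes to the whole definable closure $\overline{D}$.

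\emph{Step 1: reduction to Ext$^1$.} Since $M$ is strongly $\aleph_1$-presented, it has a projective resolution whose syzygies $\Omega^n M$ are again strongly $\aleph_1$-presented; this is exactly the $N$ used in Construction \ref{con: C} and in the proof of Lemma \ref{lem: 2.4}. Dimension shifting gives $\Ext^{\ge 1}_R(\Omega^n M, D^{(\aleph_1)})=0$ for every $n$. Therefore it suffices to prove $\Ext^1_R(M,C)=0$ for all $C\in\overline{D}$, and then apply the same argument to every syzygy.

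\emph{Step 2: $M$ is strict $\mathrm{Cogen}_*(D)$-stationary.} Write $M$ as the limit of a continuous direct system $(M_\alpha)_{\alpha<\aleph_1}$ of countably presented modules. By Lemma \ref{lem: 2.4} there is a club $S\subseteq\aleph_1$ such that $f_{\beta\alpha}$ is $D^{(\aleph_1)}$-injective for all $\alpha<\beta$ in $S$; these maps are then also $D$-injective, since $D$ is a summand of $D^{(\aleph_1)}$. Reindexing on $S$, and refining each $M_\alpha$ as a countable direct limit of finitely presented modules, we get that $\Hom_R(-,D)$ of the system is Mittag-Leffler with stabilized images equal to $\im(f_\alpha^*)$. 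In other words, $M$ is strict $D$-stationary. The step I expect to be the main obstacle is passing from countably presented to finitely presented pieces. Here I would use that the countable case of the Mittag-Leffler condition is controlled by $\llim^1$, as in the introduction and as already established inside the proof of Lemma \ref{lem: 2.4}, where $\llim^1_{\alpha<\beta}\Hom_R(M_\alpha,B)=0$ is obtained. Strict stationarity with respect to $D$ transfers to arbitrary products $D^I$, because $\Hom_R(F,-)$ commutes with products. By \cite{A08}, the class of modules relative to which $M$ is strict stationary is closed under pure submodules, and hence contains $\mathrm{Cogen}_*(D)$.

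\emph{Step 3: from strict stationarity to Ext-vanishing on $\overline{D}$.} It is known that $\overline{D}$ consists of the pure-epimorphic images, equivalently the direct limits, of modules in $\mathrm{Cogen}_*(D)$. Take $C\in\overline{D}$ and write $C$ as a pure quotient $0\to K\to Y\to C\to 0$ with $Y$ in $\mathrm{Cogen}_*(D)$. By Proposition \ref{nSML}, applied with $n=0$ and $n=1$ to $M$ and its syzygy, the maps $\Phi^{(n)}$ are injective. Dualizing with a divisible $G$, for example $G=\mathbb{Q}/\mathbb{Z}$, turns the required vanishing $\Ext^1_R(M,C)=0$ into a statement about $\Tor$ against the pure-injective dual $C^+$. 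That dual is a summand of a product of duals of modules of the form $D^{(\aleph_1)}$, and for those the vanishing is already available. Running the Ext long exact sequence along the pure sequence, using that $\Ext^1_R(M,-)$ is controlled by $\llim^1$ over the finitely presented system, we get $\Ext^1_R(M,C)=0$.

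Finally, iterate over all syzygies, which is permitted by Step 1, to conclude $M\in{^\perp}\overline{D}$.
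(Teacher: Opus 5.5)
\textbf{There is a genuine gap in Step 3, at the point where the actual content of the statement lies.} Your closing move is fine: the long exact sequence
$\Ext^1_R(M,Y)\to\Ext^1_R(M,C)\to\Ext^2_R(M,K)=\Ext^1_R(\Omega M,K)$ for a pure sequence $0\to K\to Y\to C\to 0$, with $Y$ and hence $K$ in $\mathrm{Cogen}_*(D)$, is essentially the paper's final diagram chase through the syzygy $N$. But that move needs $\Ext^1_R(\Omega^n M,Y)=0$ for every $Y\in\mathrm{Cogen}_*(D)$, and your proposal never proves this.

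Even if Step 2 is granted in full, it only gives a Mittag-Leffler property. For an $\aleph_1$-indexed system, ML does not force $\llim^1=0$; that is a countable phenomenon. So ``$\Ext^1_R(M,-)$ is controlled by $\llim^1$'' yields no vanishing. The dualization also goes the wrong way. Proposition \ref{nSML} makes $\Phi^{(1)}_M\colon \Tor_1^R(C^+,M)\to\Hom_{\mathbb Z}(\Ext^1_R(M,C),G)$ injective. That lets you pass from $\Ext$-vanishing to $\Tor$-vanishing, which is exactly how it is used in Proposition \ref{1sflisilp}. The converse would need $\Phi^{(1)}_M$ to be surjective, and that is not available for $M$ that is not finitely presented. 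Moreover, $C^+$ is a summand of $(D^I)^+$. In general this is not a summand of a product of copies of $(D^{(\aleph_1)})^+=(D^+)^{\aleph_1}$; take $D=R$ with $R$ not coherent. Also $\Tor_1^R(-,M)$ does not commute with products. So no vanishing is ``already available'' for $C^+$.

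What is missing is the transfer of $\Ext$-vanishing from $D^{(\aleph_1)}$ to pure submodules of products of $D$. This has to be done on the countably presented pieces of the filtration, not on $M$ all at once. The paper proceeds as follows, with $B=D^{(\aleph_1)}$. Each $M_\alpha$ lies in ${}^\perp B$, because $u_\alpha$ is $B$-injective. Hence $M_\alpha$ is $B$-stationary. For countably presented modules, stationarity together with this $\Ext$-vanishing gives $M_\alpha\in{}^\perp\mathrm{Cogen}_*(B)$ \cite[Proposition 4.3]{Sa}. This is where the countable $\llim^1$ argument is legitimate. However, it needs $\Ext^1_R(M_\alpha,D^{(\mathbb N)})=0$ for the single countably presented module $M_\alpha$, not the $\llim^1_{\alpha<\beta}$ over the $\aleph_1$-system that you cite. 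Next, the bonding maps become $\mathrm{Cogen}_*(B)$-injective \cite[Lemma 2.2]{SS2}. A transfinite argument along the continuous chain \cite[Lemma 2.4]{SS2} then gives $M\in{}^\perp\mathrm{Cogen}_*(B)$, and the same holds for $N$. Only after this is the step to pure-epimorphic images carried out.
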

\begin{proof}Since $M$  is  strongly $\aleph_1$-presented, it fits into the exact
sequence {(2.3)} with $N$ strongly $\aleph_1$-presented and $\Ext^{\geq 1}_R(N, D)=0$.
Applying Construction \ref{con: C}, Lemmas \ref{lem: 2.2} and \ref{lem: 2.3} to both $M$ and $N$,
we obtain an $\aleph_1$-continuous directed system
$\mathcal{W}=(N_{\alpha}\buildrel {u_{\alpha}}\over\rightarrow P_{\alpha}\buildrel {v_{\alpha}} \over\rightarrow M_{\alpha}\rightarrow 0,~~g_{\beta\alpha},e_{\beta\alpha},f_{\beta\alpha}\mid \alpha\leq\beta< \aleph_1)$
such that
${\lim}\mathcal{W}=(0\rightarrow N \buildrel {u }\over\rightarrow P \buildrel {v } \over\rightarrow M
\rightarrow 0,~~g_{\alpha},e_{\alpha},f_{\alpha}\mid \alpha\in \aleph_1 ),$
where the homomorphisms in the connecting triples $(g_{\beta\alpha})_{\alpha<\beta}$, $(e_{\beta\alpha})_{\alpha<\beta}$ and
$(f_{\beta\alpha})_{\alpha<\beta}$ are all $ D^{({\aleph_1})}$-injective. Thus the limit maps
$g_{\alpha}$, $e_{\alpha}$ and $f_{\alpha}$, $\alpha< \aleph_1$,  are  likewise $ D^{({\aleph_1})}$-injective.
Let $B=D^{(\aleph_1)}$. Applying $\Hom_{R}(-,B)$ to $\mathcal{W}$, we obtain  the
commutative diagram as in (2.6) with each term $J_\alpha$ replaced by $\Hom_{R}(M_{\alpha},B)$.


We first show that $M\in {^{\perp}\textup{Cogen}_{*}(B)}$.
For each $\alpha< \aleph_1$, since $u_\alpha$ is $B$-injective, $M_\alpha\in {^{\perp}B}$.
Thus $M_\alpha$ is $B$-stationary by \cite[Example 3.13(1)]{A08}
and {$M_\alpha\in ^{\perp}\textup{Cogen}_{*}(B)$} by \cite[ Proposition 4.3]{Sa}.
By \cite[Lemma 2.2]{SS2}, $f_{\alpha+1,\alpha}$ is $\textup{Cogen}_{*}(B)$-injective, and
\cite[Lemma 2.4]{SS2} yields $M\in {^{\perp}\textup{Cogen}_{*}(B)}$.

\vspace{0.2cm}
Note that $\overline{B}$ is just the closure of
$\textup{Cogen}_{*}(B)$ under pure-epimorphic images.
Let $\varphi:E\rightarrow F$ be a pure-epimorphism
with $E\in \textup{Cogen}_{*}(B)$.
We must verify that every morphism $\psi:N\rightarrow F$ factorizes through $u$.
$${\xymatrix{
&&N\ar@{.>}[d]_{\phi}\ar[drr]|{\psi}\ar[rr]^{u}&&P\ar@{.>}[d] \\
0\ar[r]&\Ker(\varphi)\ar[r]&E\ar[rr]^{\varphi}&&F \ar[r]&0.}}$$
However, $\ker(\varphi)\in \textup{Cogen}_{*}(B)$ yields the existence of $\phi:N\rightarrow E$
such that $\varphi\phi=\psi$. Since $\coker(u)=M\in {^{\perp}\textup{Cogen}_{*}(B)}$ and $u$ is $B$-injective,
$u$ is $\textup{Cogen}_{*}(B)$-injective by the argument above.
We can then factorize the morphism $\phi$ through $u$. The
composition of the resulting map $\sigma: P\rightarrow  E$ with $u$ is the desired factorization.
This completes the proof.
\end{proof}

\section {\bf  Gorenstein projective modules and Gorenstein flat modules}

Recall that an exact sequence
$${\xymatrix{\cdots\ar[r]&P^{-2}\ar[r]&P^{-1}\ar[r]&P^{0}\ar[r]&P^{1}\ar[r]& \cdots}}\eqno {(3.1)}$$
of projective $R$-modules is a \emph{complete projective resolution} if $\Hom_{R}(-,P)$ leaves
it exact for every projective $R$-module $P$. An exact sequence $(3.1)$ of flat
$R$-modules is a \emph{complete flat resolution} if $E\bigotimes_{R} -$ leaves it exact for
every injective right $R$-module $E$.
An $R$-module $M$ is called \emph{Gorenstein
projective } (resp. \emph{Gorenstein flat}) \cite{EJa, EJT} if $M\cong \ker(P^{0}\rightarrow P^{1})$
in a complete projective (resp. complete flat) resolution $(3.1)$.
The class of Gorenstein projective modules is denoted by $\mathcal{GP}$, and
class of Gorenstein flat  modules is denoted by $\mathcal{GF}$.

A Gorenstein projective module $M$ is called \emph{strongly Gorenstein projective} \cite{BM} if there is a
complete projective resolution
$\cdots\rightarrow P\buildrel {f}\over\rightarrow P\buildrel {f}\over\rightarrow P\rightarrow \cdots$
with $M\cong \ker(f)$. Note that a module is Gorenstein projective if and only
if it is a direct summand of a strongly Gorenstein projective module (see \cite[Theorem 2.7]{BM}).

Holm \cite{HO1} asked whether every Gorenstein projective module is Gorenstein flat.
 It is well known (\cite[Lemma 3.2, Lemma 3.4]{WL}) that
each strongly countably presented Gorenstein projective (countably presented strongly Gorenstein projective) $R$-module  is Gorenstein flat.
The result we prove here can be seen as a continuation of this line.
The following  results give a  partial answer  to this question.

\begin{prop}\label{prop: 3.1}
Every $\aleph_1$-generated strongly Gorenstein projective module is Gorenstein flat.
\end{prop}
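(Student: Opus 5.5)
Let $M$ be an $\aleph_1$-generated strongly Gorenstein projective module, with complete projective resolution $\cdots\to P\xrightarrow{f}P\xrightarrow{f}P\to\cdots$ and $M\cong\ker f$. The plan is to reduce to a countably generated $P$ so that $M$ is strongly $\aleph_1$-presented, and then apply Proposition \ref{prop: 2.4} with $D=R$.

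\textbf{Step 1: shrinking $P$.} Since $M=\im f$ is $\aleph_1$-generated, Kaplansky's theorem together with a closure argument lets us replace the resolution by one with $P$ $\aleph_1$-generated. Write $P$ as a direct sum of countably generated projectives, and build an increasing chain of direct summands $P_0\subseteq P_1\subseteq\cdots$, each a sum of $\le\aleph_1$ of these pieces, with $f(P_n)\subseteq P_{n+1}$ and $\ker f\cap P_{n+1}\supseteq$ generators of $\im f|_{P_n}$. The union $P'$ is then an $\aleph_1$-generated summand with $f(P')\subseteq P'$ and $\ker(f|_{P'})=\im(f|_{P'})\cong M$. We must also arrange that $\Hom(-,\text{proj})$-exactness is preserved; alternatively, we use the well-known fact that $M$ is strongly Gorenstein projective iff there is an exact sequence $0\to M\to P\to M\to0$ with $P$ projective and $\Ext^1(M,\mathcal P)=0$. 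In that formulation we only need $0\to M\to P'\to M\to 0$ with $P'$ $\aleph_1$-generated projective, obtained by choosing a summand $P'$ of $P$ containing the image of $M$ and mapping onto $M$. I expect this bookkeeping to be the main obstacle.

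\textbf{Step 2: presentation.} In $0\to M\to P'\to M\to0$, $P'$ is a direct sum of $\le\aleph_1$ countably generated projectives and the kernel $M$ is $\aleph_1$-generated. Hence $M$ is $\aleph_1$-presented, and iterating the sequence gives a projective resolution with $\le\aleph_1$-generated terms at every stage. So $M$ and all its syzygies (which are copies of $M$) are $\aleph_1$-presented, i.e.\ $M$ is strongly $\aleph_1$-presented.

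\textbf{Step 3: applying Proposition \ref{prop: 2.4}.} $R^{(\aleph_1)}$ is projective, so $\Ext^{\ge1}_R(M,R^{(\aleph_1)})=0$ because $M$ is Gorenstein projective. Proposition \ref{prop: 2.4} with $D=R$ gives $M\in{}^\perp\overline{R}$. Flat modules are direct limits of projectives, so $\mathcal{FL}\subseteq\overline R$ and $\Ext^{\ge1}(M,F)=0$ for all flat $F$. Equivalently, for every injective right module $E$, the character dual $E^+$ is flat, and since $\Tor_i(E,M)^+\cong\Ext^i(M,E^+)=0$, we get $\Tor_{\ge1}^R(E,M)=0$.

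\textbf{Step 4: complete flat resolution.} The same vanishing holds for the copy of $M$ at every stage, so splicing the sequences $0\to M\to P'\to M\to0$ gives an exact complex of projective (hence flat) modules whose cycles are all $M$. Since $\Tor_1(E,M)=0$, the functor $E\otimes_R-$ keeps each short exact piece exact, so the whole complex stays exact after tensoring. Thus $M$ is Gorenstein flat.
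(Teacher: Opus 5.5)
Steps 1, 2 and 4 of your proposal are sound. The gap is in Step 3: you need $\Tor^R_{\ge1}(E,M)=0$ for injective right modules $E$, and you get it from the claim that $E^+$ is flat. That claim fails for general rings.

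\textbf{What Steps 1--2 contribute.} They verify something the paper's proof uses silently: that $M$ is strongly $\aleph_1$-presented, which Proposition \ref{prop: 2.4} requires. Your second variant of Step 1 needs no further bookkeeping. Write $0\to M\xrightarrow{\iota}P\xrightarrow{p}M\to0$ and let $P'$ be a summand of $P$ with $\iota(M)\subseteq P'$ and $p(P')=M$. Then $\ker(p|_{P'})=\ker p\cap P'=\iota(M)$.

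\textbf{Why the claim in Step 3 fails.} The word ``equivalently'' is unjustified, and the assertion itself characterizes right coherent rings. Take $E=(R^+)^I$. Dualizing the pure embedding $(R^+)^{(I)}\to(R^+)^I$ makes $(R^{++})^I$ a direct summand of $E^+$. Also, $R^I$ is pure in $(R^{++})^I$. So if $E^+$ were flat, every $R^I$ would be flat, and $R$ would be right coherent by Chase's theorem. The proposition is stated for arbitrary rings, and the rings of Proposition \ref{thm:main} are not coherent. So $\Ext^{\ge1}_R(M,\mathcal{FL})=0$ alone does not give $\Tor^R_{\ge1}(E,M)=0$.

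\textbf{How to repair it.} Proposition \ref{prop: 2.4} gives you more than orthogonality to flat modules: it gives $M\in{}^\perp\overline{R}$. Two ways to use this:

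\emph{Via duality.} $E$ is a summand of $(R^+)^I\cong(R^{(I)})^+$, so $E^+$ is a summand of $(R^{(I)})^{++}$. Definable classes are closed under double character duals (Herzog's elementary duality), so $E^+\in\overline{R}$. Then your isomorphism $\Tor_i^R(E,M)^+\cong\Ext^i_R(M,E^+)=0$ finishes Step 3.

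\emph{Within the paper's tools.} By Lemma \ref{ML}, $M\cong\Omega^1M$ is strict $\overline{R}$-Mittag-Leffler. By Proposition \ref{nSML}, the map $\Tor_1^R(\Hom_{\mathbb Z}(R,G),M)\to\Hom_{\mathbb Z}(\Ext^1_R(M,R),G)=0$ is then injective for every divisible group $G$. Every injective right module is a summand of some $\Hom_{\mathbb Z}(R,G)$, so $\Tor_1^R(E,M)=0$. This is essentially what the paper packages as ``$M$ is strict $R$-stationary, then \cite[Theorem 2.2]{EM}''.

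With either repair, your Step 4 goes through as written, since every cycle of the spliced complex is $M$.
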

\begin{proof} Let $M$ be an $\aleph_1$-generated strongly Gorenstein projective module. We  have $M\in {^\perp \overline{R}}$
by Proposition \ref{prop: 2.4}  and thus $M$ is strict $R$-stationary by \cite[Lemmas 4.2]{Sa}. Consequently, the result
follows from \cite[Theorem 2.2]{EM}.
\end{proof}

Let $\kappa$ be an infinite cardinal. An $R$-module $M$ is said to be \emph{strongly $\kappa$-presented} if
it has a projective resolution consisting of $\kappa$-generated projective modules.
	\begin{prop}\label{s1GPGF}
Every strongly $\aleph_1$-presented Gorenstein projective module is Gorenstein flat.
	\end{prop}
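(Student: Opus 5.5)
The plan is to follow the proof of Proposition \ref{prop: 3.1}, with Proposition \ref{prop: 2.4} applied to $D=R$. The extra work is to check that the hypotheses of Proposition \ref{prop: 2.4} hold for every module appearing in a complete projective resolution, not just for one syzygy.

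Let $M$ be strongly $\aleph_1$-presented and Gorenstein projective. Fix a complete projective resolution $(3.1)$ with $M\cong\ker(P^0\to P^1)$, and write $K^i=\ker(P^i\to P^{i+1})$, so that every $K^i$ is Gorenstein projective.

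\emph{Step 1: the left half of $(3.1)$ can be taken small.} The left part $\cdots\to P^{-1}\to K^0=M\to 0$ is a projective resolution of $M$. Since $M$ is strongly $\aleph_1$-presented, and projective resolutions of $M$ are homotopy equivalent, I may replace this half by one consisting of $\aleph_1$-generated projectives. Then every $K^i$ with $i\le 0$ is strongly $\aleph_1$-presented.

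\emph{Step 2: the Ext condition.} Every $K^i$ is Gorenstein projective, so $\Ext^{\ge1}_R(K^i,Q)=0$ for every projective $Q$. In particular $\Ext^{\ge1}_R(K^i,R^{(\aleph_1)})=0$, since $R^{(\aleph_1)}$ is free. Proposition \ref{prop: 2.4} with $D=R$ then gives $K^i\in{}^\perp\overline{R}$ for all $i\le 0$. Because $\overline{R}$ contains all flat modules and all products of them, $K^i\in{}^\perp\overline{R}$ says in particular that $\Hom_R(-,F)$ is exact on the left half of $(3.1)$ for every flat $F$. By Lemma \ref{ML} (or \cite[Lemma 4.2]{Sa}), $M$ and its syzygies are moreover strict $R$-stationary (strict $\overline{R}$-Mittag-Leffler).

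\emph{Step 3: conclude Gorenstein flatness.} If a result like \cite[Theorem 2.2]{EM} applies directly to Gorenstein projective modules that are strict $R$-stationary, I stop here exactly as in Proposition \ref{prop: 3.1}. Otherwise I would verify that $E\otimes_R(3.1)$ is exact for every injective right module $E$, and I would do this through Proposition \ref{nSML}. Take $B=R$ and $G=\mathbb{Q}/\mathbb{Z}$, so that $\Hom_{\mathbb{Z}}(R,G)=R^+$ is injective. Applying Proposition \ref{nSML} to the syzygies $K^{i}$, the map $\Tor_n^R(R^+,K^i)\to\Hom_{\mathbb{Z}}(\Ext^n_R(K^i,R),G)$ is injective, and the target vanishes for $n\ge1$. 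Hence $\Tor_{\ge1}^R(R^+,K^i)=0$.

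To pass from $R^+$ to an arbitrary injective right module $E$, I would instead take $B$ to be a suitable flat module. Character modules of injectives are flat over general rings, and flat modules lie in $\overline{R}$, so Step 2 gives $\Ext^{\ge1}_R(K^i,B)=0$ for such $B$. Proposition \ref{nSML} then yields $\Tor^R_{\ge1}(B^+,K^i)=0$. Every injective $E$ is a direct summand of $E^{++}=(E^+)^+$, so $\Tor_{\ge1}^R(E,K^i)=0$ for all injective right modules $E$.

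\emph{Main obstacle.} The difficulty is the right half of $(3.1)$. The cokernels $K^i$ for $i>0$ need not be strongly $\aleph_1$-presented, so Proposition \ref{prop: 2.4} does not apply to them. However, Tor-vanishing for every $K^i$, including $i\le0$, already gives exactness of $E\otimes(3.1)$ at every spot, because each $K^{i}$ is a kernel of the complex.

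The missing piece is therefore the right half, and I would handle it by exploiting the paper's phrasing. I would first try to reduce to the strongly Gorenstein case: $M$ is a summand of $M\oplus K$ sitting in $0\to M\oplus K\to P\to M\oplus K\to0$, built from the whole resolution via \cite[Theorem 2.7]{BM}. This reduction needs the strong $\aleph_1$-presentation to persist. Failing that, I would use the fact that Gorenstein flat modules are exactly the Tor-orthogonal kernels admitting a left flat resolution and a right flat coresolution, with the coresolution supplied by any flat preenvelope argument. For this I would cite Šaroch–Šťovíček's result that $\mathcal{GF}={}^{\perp_{\Tor}}\mathcal{I}$ together with membership in the left $\Tor$-orthogonal of injectives. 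With that theorem, the Tor-vanishing from Step 3 for $M$ itself suffices and the right half becomes irrelevant.
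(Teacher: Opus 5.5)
Your Steps 1 and 2 are sound, but there is a genuine gap exactly where you locate the ``main obstacle'', and neither proposed fix closes it.

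\textbf{The right half is not handled.} Tor-vanishing for the $K^i$ with $i\le 0$ says nothing about exactness of $E\otimes_R(3.1)$ to the right of $M$. From $0\to K^0\to P^0\to K^1\to 0$ you only get $\Tor^R_{\ge 2}(E,K^1)=0$, while $\Tor^R_1(E,K^1)\cong\Ker(E\otimes_R K^0\to E\otimes_R P^0)$ is uncontrolled. Your reduction via \cite[Theorem 2.7]{BM} is what the paper does. However, it only helps if the $P^i$ with $i\ge 1$ are also $\aleph_1$-generated, and you do not show how to arrange this.

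The missing idea is to build a small right half:
\begin{enumerate}
\item By \cite[Proposition 2.4]{HO} there is $0\to M\to L\to M_1\to 0$ with $L$ free and $M_1$ Gorenstein projective.
\item Since $M$ is $\aleph_1$-generated, it lies in a free summand $L^0$ of $L$ of rank at most $\aleph_1$, say $L=L^0\oplus L'$. Then $L/M\cong L^0/M\oplus L'$, so $L^0/M$ is an $\aleph_1$-generated Gorenstein projective module.
\item Iterating gives $0\to M\to L^0\to L^1\to\cdots$ with all $L^i$ $\aleph_1$-generated free and all images Gorenstein projective.
\end{enumerate}
Spliced with your Step 1, this yields a complete projective resolution all of whose terms are $\aleph_1$-generated. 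So $N=\bigoplus_i K^i$ is an $\aleph_1$-generated strongly Gorenstein projective module having $M$ as a summand. The paper concludes with Proposition \ref{prop: 3.1} and \cite[Proposition 2.6]{WL}. Alternatively, once this resolution is in hand, every $K^i$ is strongly $\aleph_1$-presented, so your Steps 2--3 apply to all $K^i$ and give exactness of $E\otimes_R(3.1)$ directly.

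\textbf{The fallback and Step 3 contain false claims.} There is no theorem $\mathcal{GF}={}^{\perp_{\Tor}}\mathcal{I}$. Tor-orthogonality to injectives is strictly weaker than Gorenstein flatness in general; the existence of a right flat coresolution that stays exact under $E\otimes_R-$ is precisely the nontrivial part. For instance, over a finite-dimensional commutative local algebra, Matlis duality turns $\Tor^R_{\ge1}(E,M)=0$ into $\Ext^{\ge1}_R(M,R)=0$ for finitely generated $M$. Jorgensen and \c{S}ega constructed such $M$ that are not totally reflexive, hence not Gorenstein flat.

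In Step 3, the claim that character modules of injective right modules are flat over any ring is also false; that property characterizes right coherent rings. You do not need it. Every injective right module is a direct summand of $\Hom_{\mathbb{Z}}(R,G)$ for some divisible group $G$. Proposition \ref{nSML} with $B=R$ and arbitrary divisible $G$ then already gives $\Tor^R_{\ge1}(E,K^i)=0$ for every $K^i$ lying in ${}^\perp\overline{R}$, exactly as in the proof of Proposition \ref{1sflisilp}.
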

	
	\begin{proof}
	Let $M$ be a strongly $\aleph_1$-presented Gorenstein projective $R$-module. By \cite[Proposition 2.4]{HO}, there exists a short exact sequence $0 \to M \to L \to M_1 \to 0$ with $L$ free and $M_1$ Gorenstein projective. Since $M$ is $\aleph_1$-generated, there is a set $\{m_i\}$ of cardinal $\aleph_1$ that generates $M$. Without loss of generality, we assume that $M$ is a submodule of $L$. Suppose that $\{l_j\}$ is a set of bases of the free $R$-module $L$. Since each $m_i$ is a finite $R$-linear combination of some elements from $\{l_j\}$, there is subset of $\{l_j\}$ of cardinal $\aleph_1$ which generates a free left $R$-module $L^0$ such that $L^0$ is a direct summand of $L$ and $M$ is a submodule of $L^0$. By the proof of \cite[Lemma 2.9]{MTY}, we know that $L^0/M$ is isomorphic to a direct summand of $M_1$, and so $L^0/M$ is Gorenstein projective. Repeating this procedure, we get an exact sequence
	\[
	0 \longrightarrow M \longrightarrow L^0 \xrightarrow{d^0} L^1 \longrightarrow \cdots
	\]
	with each $L^i$ a $\aleph_1$-generated projective $R$-module and each $\operatorname{im}(d^i)$ a Gorenstein projective $R$-module. On the other hand, $M$ is strongly $\aleph_1$-presented, so we get an exact sequence of $R$-modules
	\[
	\cdots \longrightarrow L_1 \xrightarrow{d_1} L_0 \xrightarrow{d_0} L_0 \longrightarrow M \longrightarrow 0
	\]
	with each $L_i$ $\aleph_1$-generated and projective. We notice that each $\operatorname{im}(d_i)$ is Gorenstein projective. Now assembling the above two sequences, we get a complete projective resolution
	\[
	\cdots \longrightarrow L_1 \xrightarrow{d_1} L_0 \xrightarrow{d_0} L^0 \xrightarrow{d^0} L^1 \longrightarrow \cdots
	\]
	such that each $L_i$ and $L^i$ are $\aleph_1$-generated for $i \geq 0$ and $M \cong \ker(L^0 \to L^1)$.
	
	 Using a similar proof as proved in \cite[Theorem 2.7]{BM}, we get that $M$ is isomorphic to a direct summand of a strongly $\aleph_1$-presented strongly Gorenstein projective left $R$-module $N$.	 Note that $N$ are Gorenstein flat by Proposition \ref{prop: 3.1}. Then all left $R$-modules in $^\perp(N^\perp)$ are Gorenstein flat by \cite[Proposition 2.6]{WL}. Thus $M$ is Gorenstein flat since $M \in ^\perp(N^\perp)$.
	\end{proof}
 Let $R$ be a ring. An $R$-module $M$  is said to be \emph{$\kappa$-presented} if there exists an exact sequence
$0 \rightarrow K \rightarrow F \rightarrow M \rightarrow 0$ where $F$ is a free module of rank $\kappa$ and $K$ is a $\kappa$-generated submodule of $F$. A ring $R$ is called a \emph{left $\kappa$-coherent ring} if every $\kappa$-generated left ideal $I$ of $R$ is $\kappa$-presented. It is easy to verify that  every $\kappa$-presented $R$-module is strongly  $\kappa$-presented over a left $\kappa$-coherent ring $R$, where $\kappa$ is an infinite regular cardinal.

\begin{cor}
 Let $R$ be a {left} $\aleph_1$-coherent ring. Then every $\aleph_1$-generated  Gorenstein projective $R$-module is Gorenstein flat.
\end{cor}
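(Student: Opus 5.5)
The plan is to reduce the corollary to Proposition \ref{s1GPGF}. It then suffices to show that, over a left $\aleph_1$-coherent ring $R$, every $\aleph_1$-generated Gorenstein projective module $M$ is strongly $\aleph_1$-presented.

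The key steps, in order, are as follows.

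First, I show $M$ is $\aleph_1$-presented. Choose an epimorphism $F\to M$ with $F$ free of rank $\aleph_1$ (or less), and let $K$ be its kernel. Since $M$ is Gorenstein projective, by \cite[Proposition 2.4]{HO} it embeds in a free module $L$ with Gorenstein projective cokernel. Running the same reduction as in the proof of Proposition \ref{s1GPGF}, we may take $M\subseteq L^0$ with $L^0$ free of rank $\aleph_1$, so $M$ is an $\aleph_1$-generated submodule of $L^0\cong R^{(\aleph_1)}$.

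Second, I show that $\aleph_1$-generated submodules of $\aleph_1$-generated free modules are $\aleph_1$-presented. Left $\aleph_1$-coherence says this for $\aleph_1$-generated left ideals, i.e. submodules of $R$. I extend it to submodules of $R^{(n)}$ by the usual induction on $n$, using the exact sequence $0\to U\cap R^{(n-1)}\to U\to \pi(U)\to 0$ together with closure of $\aleph_1$-presented modules under extensions. The intersection term is the delicate point: it is $\aleph_1$-generated because it is the kernel of a map from an $\aleph_1$-presented module onto an $\aleph_1$-presented module. Finally I pass to $R^{(\aleph_1)}$ as a well-ordered union of finitely generated free summands, noting that any $\aleph_1$-generated $U$ is a union of $\aleph_1$ many pieces, each living in a finite free summand.

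Third, I use the statement already noted in the paper: over a left $\aleph_1$-coherent ring, every $\aleph_1$-presented module is strongly $\aleph_1$-presented. The syzygy of an $\aleph_1$-presented module is an $\aleph_1$-generated submodule of an $\aleph_1$-generated free module, hence $\aleph_1$-presented by the second step, and induction gives a resolution by $\aleph_1$-generated free modules. So $M$ is strongly $\aleph_1$-presented, and Proposition \ref{s1GPGF} yields that $M$ is Gorenstein flat.

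The main obstacle is the second step. I do not see how to carry out the passage from finite rank to rank $\aleph_1$ rigorously. The intersections with finite free summands need not be controlled uniformly, and a union of $\aleph_1$ many $\aleph_1$-presented modules need not be $\aleph_1$-presented without some continuity argument. To get around this I would first try the easier route: since $M\subseteq L^0$ with $L^0/M$ also $\aleph_1$-generated, $M$ is the kernel of a map between $\aleph_1$-generated modules, and coherence-type closure properties should give $\aleph_1$-presentation directly. If that fails, I would simply invoke the remark preceding the corollary, that $\aleph_1$-presented modules are strongly $\aleph_1$-presented, applied to $L^0/M$. That module is $\aleph_1$-presented provided $M$ is $\aleph_1$-generated, which holds. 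Then $M$, as a first syzygy of $L^0/M$, inherits a resolution by $\aleph_1$-generated projectives.
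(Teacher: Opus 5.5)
Your fallback argument is correct and follows the paper's route. The paper's proof is just an appeal to Proposition~\ref{s1GPGF}, relying on the preceding remark that $\aleph_1$-presented modules over a left $\aleph_1$-coherent ring are strongly $\aleph_1$-presented, and your observation that $M\subseteq L^0$ with $L^0$ free of rank $\aleph_1$ makes $L^0/M$ $\aleph_1$-presented (Schanuel's lemma then passes strong $\aleph_1$-presentedness to $M$) supplies the step the paper leaves implicit.

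The obstacle you flag in your second step is not real. Exhaust $U$ by its finitely generated submodules; each lies in a finite free summand and so is $\aleph_1$-presented by your finite-rank induction, which only needs $U$ to be $\aleph_1$-generated, not presented. A direct limit of at most $\aleph_1$ many $\aleph_1$-presented modules $U_i$ is then $\aleph_1$-presented via the canonical presentation $\bigoplus_{i\le j}U_i\to\bigoplus_i U_i\to U\to 0$, with no continuity argument needed.
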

\begin{proof} It follows by Propositions \ref{s1GPGF}.
\end{proof}

 In \cite[Section 4]{SS2},
\v{S}aroch and \v{S}\'{t}ov\'{\i}\v{c}ek introduced a class of \emph{projectively coresolved Gorenstein flat modules}
and denote this class by $\mathcal{PGF}$.
They are the cycles of exact sequences of projective modules that remain exact when tensored with any injective right $R$-module. It is obvious that $\mathcal{FL}\bigcap \mathcal{GF}=\mathcal{FL}$ and
$\mathcal{P}\subseteq \mathcal{FL} \bigcap \mathcal{GP}$. It is well known that
\cite[Theorem 4.11]{SS2}, $\mathcal{GF}\bigcap \mathcal{PGF}^{\perp}=\mathcal{FL}$ and $\mathcal{PGF}\subseteq\mathcal{GP}$.
 A natural question is to determine whether
the two classes coincide, i.e., $\mathcal{P}=\mathcal{FL} \bigcap \mathcal{GP}$, equivalent to
that \cite[Question 3.8]{BA} whether any flat Gorenstein projective module is projective.
The main concern in this section is to discuss when a flat Gorenstein projective module is projective.
Regarding this question, there are some known results as follows.
\begin{itemize}
\item Every flat Gorenstein projective $R$-module is trivially projective when $R$ is perfect.
\item \cite[Corollary 2.7]{BA} Every  flat and strongly Gorenstein projective module is projective.
\item \cite[Proposition 2.9]{BA} If $\textup{FPD$_{Flat}$}(R)<\infty$, then every flat Gorenstein projective module is projective.
\end{itemize}

Moreover, we have the following results.
\begin{lem}\label{gpgfplfgp}
If $\mathcal{GP}\subseteq \mathcal{GF}$, then $\mathcal{P}=\mathcal{FL} \bigcap \mathcal{GP}$.
\end{lem}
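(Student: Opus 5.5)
Assume $\mathcal{GP}\subseteq\mathcal{GF}$. The inclusion $\mathcal{P}\subseteq\mathcal{FL}\cap\mathcal{GP}$ is clear, so only the reverse inclusion needs proof. Let $M$ be flat and Gorenstein projective. The plan is to show that $M$ is in fact projectively coresolved Gorenstein flat, and then use the cotorsion-pair facts of \v{S}aroch--\v{S}\'{t}ov\'{\i}\v{c}ek \cite[Section 4]{SS2} to conclude that $M$ is projective.

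First, take a complete projective resolution $\mathbf{P}=\cdots\to P^{-1}\to P^0\to P^1\to\cdots$ with $M\cong\ker(P^0\to P^1)$. Every cycle module $Z^i$ of $\mathbf{P}$ is Gorenstein projective, hence Gorenstein flat by hypothesis. I will show that $E\otimes_R\mathbf{P}$ is exact for every injective right module $E$; then $\mathbf{P}$ witnesses $M\in\mathcal{PGF}$. Exactness of $E\otimes_R\mathbf{P}$ at a given spot amounts to $\Tor_1^R(E,Z^{i})=0$ for all $i$: split $\mathbf{P}$ into short exact sequences $0\to Z^{i}\to P^{i}\to Z^{i+1}\to 0$ and use that $\Tor_1(E,P^i)=0$. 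For a Gorenstein flat module $G$ and injective $E$ one has $\Tor_{\ge1}^R(E,G)=0$, since $G$ is a cycle of a complete flat resolution that stays exact under $E\otimes_R-$, and flat modules are $\Tor$-acyclic. Therefore $\Tor_1^R(E,Z^{i})=0$, and $\mathbf{P}$ is a $\mathcal{PGF}$-complete resolution, so $M\in\mathcal{PGF}$.

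Next I use flatness. By \cite[Theorem 4.11]{SS2}, $(\mathcal{PGF},\mathcal{PGF}^{\perp})$ is a complete hereditary cotorsion pair with $\mathcal{PGF}\cap\mathcal{PGF}^{\perp}=\mathcal{P}$, and flat modules lie in $\mathcal{PGF}^{\perp}$. The last fact is part of $\mathcal{GF}\cap\mathcal{PGF}^{\perp}=\mathcal{FL}$ quoted above. Since $M$ is flat, $M\in\mathcal{PGF}^{\perp}$, hence $M\in\mathcal{PGF}\cap\mathcal{PGF}^{\perp}=\mathcal{P}$.

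If only the displayed identity $\mathcal{GF}\cap\mathcal{PGF}^{\perp}=\mathcal{FL}$ is allowed, I would argue directly instead. $M$ is flat, so $M\in\mathcal{PGF}^\perp$. The cosyzygy $Z^1$ is in $\mathcal{PGF}$, so $\Ext^1(Z^1,M)=0$. Hence $0\to M\to P^0\to Z^1\to 0$ splits, and $M$ is a direct summand of $P^0$, i.e. projective.

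The main obstacle is the first step: checking that $E\otimes_R-$ preserves the exactness of the specific complete projective resolution of $M$, rather than only of some flat resolution. This is where the hypothesis $\mathcal{GP}\subseteq\mathcal{GF}$ has to be applied to every cycle $Z^i$, not just to $M$.
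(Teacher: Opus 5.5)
Your proof is correct and follows the paper's route, which is the chain $\mathcal{FL}\cap\mathcal{GP}=(\mathcal{GF}\cap\mathcal{PGF}^{\perp})\cap\mathcal{GP}=\mathcal{PGF}^{\perp}\cap\mathcal{GP}=\mathcal{GP}^{\perp}\cap\mathcal{GP}=\mathcal{P}$ built on \cite[Theorem 4.11]{SS2}. Your first step (every cycle of a complete projective resolution is Gorenstein flat, so the resolution stays exact under $E\otimes_R-$, giving $\mathcal{GP}=\mathcal{PGF}$) is exactly the justification the paper leaves unstated for its middle equality $\mathcal{PGF}^{\perp}\cap\mathcal{GP}=\mathcal{GP}^{\perp}\cap\mathcal{GP}$, which is where the hypothesis $\mathcal{GP}\subseteq\mathcal{GF}$ is really needed.
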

\begin{proof} It follows by the assumption and \cite[Theorem 4.11]{SS2} that $$\mathcal{FL} \bigcap \mathcal{GP} =(\mathcal{GF}\bigcap \mathcal{PGF}^{\perp})\bigcap \mathcal{GP}= \mathcal{PGF}^{\perp}\bigcap \mathcal{GP}= \mathcal{GP}^{\perp}\bigcap \mathcal{GP}=\mathcal{P}.$$

\end{proof}






\begin{prop} \label{prop: 6.1}
Let $(M_{i})_{i\in I}$ be a direct system of projective modules such that $|I|<\aleph_{\omega}$ and $M=\lim M_{i}$.
If $M$ is $($flat$)$ Gorenstein projective, then it is  projective.
\end{prop}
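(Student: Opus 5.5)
The plan is to show that $M$ has finite projective dimension, and then to use the standard fact that a Gorenstein projective module of finite projective dimension is projective. Flatness will not be needed beyond the fact that $M$ is a direct limit of projectives.

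\emph{Step 1: $\Ext$ as derived inverse limits.} Since each $M_i$ is projective, the acyclic complex
$$\cdots\rightarrow\bigoplus_{i_0<i_1<i_2}M_{i_0i_1i_2}\rightarrow\bigoplus_{i_0<i_1}M_{i_0i_1}\rightarrow\bigoplus_i M_i\rightarrow M\rightarrow 0$$
recalled before Lemma \ref{lem: 2.1} is a projective resolution of $M$. Applying $\Hom_R(-,C)$ turns it into exactly the cochain complex that computes $\llim^n$ of the inverse system $(\Hom_R(M_i,C))_{i\in I}$. Hence
$$\Ext^n_R(M,C)\cong\llim^n_{i\in I}\Hom_R(M_i,C)$$
for every module $C$ and every $n\ge 0$. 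One should check that the indexing by chains $i_0<i_1<\cdots$ gives the standard $\llim$-complex, or at least one with the same cohomology. This is the usual Roos/Jensen description referenced from \cite{JE}.

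\emph{Step 2: vanishing of higher limits.} Write $|I|\le\aleph_k$ for some $k<\omega$, which is possible since $|I|<\aleph_\omega$. I would invoke the Goblot--Mitchell theorem, available in Jensen \cite{JE}: for an inverse system of abelian groups indexed by a directed set of cardinality at most $\aleph_k$, one has $\llim^n=0$ for all $n\ge k+2$. If $I$ is finite, it has a largest element, $M$ is one of the $M_i$, and there is nothing to prove. Combined with Step 1, this gives $\Ext^n_R(M,C)=0$ for all $C$ and all $n\ge k+2$, so $\operatorname{pd}_R M\le k+1<\infty$.

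\emph{Step 3: conclusion.} Since $M$ is Gorenstein projective, $\Ext^{\ge1}_R(M,Q)=0$ for every module $Q$ of finite projective dimension; this follows from \cite{HO1} by dimension shifting from projective $Q$. Take a projective resolution $0\to P_m\to\cdots\to P_0\to M\to0$ with $m\le k+1$, and let $K=\ker(P_0\to M)$. Then $\operatorname{pd}K<\infty$, so $\Ext^1_R(M,K)=0$. Thus $0\to K\to P_0\to M\to0$ splits and $M$ is projective.

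\emph{Main obstacle.} The only nontrivial input is Step 2: quoting the cardinality bound on $\llim^n$ correctly and making sure it applies to the (possibly non-well-ordered) directed index set $I$. If the literature version is stated only for well-ordered or cofinal subsystems, I would first replace $I$ by a cofinal directed subset of the same cardinality, which leaves the limit and the $\llim^n$ unchanged.
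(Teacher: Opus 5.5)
Your proof is correct, and it has the same shape as the paper's: show $\operatorname{pd}_R M<\infty$, then use that a Gorenstein projective module of finite projective dimension is projective. The paper compresses that last step into ``Thus $M$ is projective''; you spell it out correctly using Holm's vanishing of $\Ext^{\ge 1}_R(M,Q)$ for $Q$ of finite projective dimension.

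The difference lies in how finite projective dimension is obtained.

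\begin{itemize}
\item The paper cites \v{S}aroch--\v{S}\'{t}ov\'{\i}\v{c}ek \cite[Lemma 6.5]{SS1}. As quoted, this gives a finite-length resolution $0\to X_n\to\cdots\to X_0\to\lim M_i\to 0$ with every $X_j\in\mathrm{Add}\{M_i\mid i\in I\}$. Projectivity of the $X_j$ is inherited from the $M_i$; it is not used to compute anything.
\item You use projectivity of the $M_i$ in a different way. It lets you read the bar complex as a projective resolution, so that $\Ext^n_R(M,C)\cong\llim^n\Hom_R(M_i,C)$ (Jensen). You then invoke the Goblot--Mitchell bound on the cohomological dimension of a directed set of cardinality $\aleph_k$.
\end{itemize}

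Both inputs rest on the same phenomenon: directed sets of size $<\aleph_\omega$ have finite cohomological dimension. The cited lemma is essentially a resolution-level form of Goblot's theorem, so neither route is substantially more elementary.

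What each buys:
\begin{itemize}
\item Your version makes the role of the hypothesis $|I|<\aleph_\omega$ transparent. It also yields the explicit bound $\operatorname{pd}_R M\le k+1$ when $|I|\le\aleph_k$, with the countable case recovering the classical $\operatorname{pd}\le 1$.
\item The paper's citation is shorter. It also produces a resolution by sums of summands of the $M_i$, a structural statement that does not depend on computing $\Ext$.
\end{itemize}

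Your concern about non-well-ordered index sets is unnecessary. The Goblot--Mitchell vanishing $\llim^n=0$ for $n\ge k+2$ holds for arbitrary directed posets of cardinality $\aleph_k$, so no passage to a cofinal subsystem is needed.
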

\begin{proof}
By \cite[Lemma 6.5]{SS1}, there is an exact sequence:
$0\rightarrow X_{n}\rightarrow \cdots \rightarrow X_{1}\rightarrow X_{0}\rightarrow \lim M_{i}\rightarrow 0,$
where $n$ is a non-negative integer, $X_{j} \in \textup{Add} \{M_{i} \mid i \in I \}$ and $X_{j}$ is projective for all $j= 0,~\cdots,~n$.
Thus $M$ is projective.
\end{proof}

At the end of this section, we give a sufficient condition on when  Gorenstein projective modules are Gorenstein flat.
The set difference of $A$ and $B$ is denoted as $A-B$.

\begin{prop} \label{prop: 4.8} Let $R$ be a ring. If $\mathcal{PGF}^{\perp}$ is closed under direct sums and
$\mathcal{PGF}^{\perp}=\mathcal{GP}^{\perp}\bigcup \mathcal{GF}^{\perp}$,
then every Gorenstein projective module is Gorenstein flat.
In particular, every flat Gorenstein projective module is projective in this case.
\end{prop}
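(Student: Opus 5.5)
The plan is to show $\mathcal{GP}\subseteq\mathcal{GF}$ by comparing $\mathcal{GP}$ with $\mathcal{PGF}$, and then to read off the second assertion from Lemma \ref{gpgfplfgp}.

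By \cite[Theorem 4.11]{SS2} we have $\mathcal{PGF}\subseteq\mathcal{GP}$, and it is known that $\mathcal{PGF}\subseteq\mathcal{GF}$. Moreover, $(\mathcal{PGF},\mathcal{PGF}^{\perp})$ is a complete hereditary cotorsion pair. Since $\mathcal{PGF}\subseteq\mathcal{GP}$, we get $\mathcal{GP}^{\perp}\subseteq\mathcal{PGF}^{\perp}$. It therefore suffices to show $\mathcal{PGF}^{\perp}\subseteq\mathcal{GP}^{\perp}$: then $\mathcal{GP}\subseteq{}^{\perp}(\mathcal{GP}^{\perp})={}^{\perp}(\mathcal{PGF}^{\perp})=\mathcal{PGF}\subseteq\mathcal{GF}$.

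To obtain this inclusion, I use the hypothesis $\mathcal{PGF}^{\perp}=\mathcal{GP}^{\perp}\cup\mathcal{GF}^{\perp}$. It is enough to prove $\mathcal{GF}^{\perp}\subseteq\mathcal{GP}^{\perp}$, or more directly to show that every $Y\in\mathcal{PGF}^{\perp}$ already lies in $\mathcal{GP}^{\perp}$.

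Fix $Y\in\mathcal{PGF}^{\perp}$ and suppose $Y\notin\mathcal{GP}^{\perp}$. Then $Y\in\mathcal{GF}^{\perp}$, and since $\mathcal{FL}\subseteq\mathcal{GF}$, $Y$ is cotorsion. Choose any $Z\in\mathcal{GP}^{\perp}$ that is not in $\mathcal{GF}^{\perp}$, if one exists; for instance, a projective module that is not cotorsion. The sum $Y\oplus Z$ lies in $\mathcal{PGF}^{\perp}$ by the closure under direct sums, so it belongs to $\mathcal{GP}^{\perp}$ or to $\mathcal{GF}^{\perp}$. Both classes are closed under direct summands. The first option would force $Y\in\mathcal{GP}^{\perp}$, and the second would force $Z\in\mathcal{GF}^{\perp}$; either is a contradiction.

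If no such $Z$ exists, then $\mathcal{GP}^{\perp}\subseteq\mathcal{GF}^{\perp}$, so $\mathcal{PGF}^{\perp}=\mathcal{GF}^{\perp}$. In that case every projective module lies in $\mathcal{GF}^{\perp}$, hence is cotorsion, so in particular $R$ is cotorsion. Then $\mathcal{GF}={}^{\perp}(\mathcal{GF}^{\perp})\supseteq{}^{\perp}(\mathcal{PGF}^{\perp})=\mathcal{PGF}$ by the completeness of the Gorenstein flat cotorsion pair. I would instead aim for $\mathcal{GP}\subseteq{}^{\perp}\mathcal{GF}^{\perp}=\mathcal{GF}$ by showing $\mathcal{GF}^{\perp}\subseteq\mathcal{GP}^{\perp}$. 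Here I would use that $\mathcal{GF}^{\perp}$ consists of cotorsion modules $C$ with $\Ext^{\geq1}(\mathcal{GF},C)=0$, and that Gorenstein projectives are cycles of complete projective resolutions. The plan is to compute $\Ext^1(G,C)$ through dimension shifting along such a resolution, using $C\in\mathcal{PGF}^{\perp}$ and the fact that $\mathcal{PGF}^{\perp}$ contains all flat modules and is thick.

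This second case, ruling out or handling the degenerate situation with the union hypothesis, is the step I expect to be the main obstacle. The direct-sum trick is the key idea for the generic case.

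Finally, once $\mathcal{GP}\subseteq\mathcal{GF}$ is established, Lemma \ref{gpgfplfgp} gives $\mathcal{P}=\mathcal{FL}\cap\mathcal{GP}$, so every flat Gorenstein projective module is projective.
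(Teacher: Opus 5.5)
Your generic case is correct, and it is neater than the paper's. A projective module $Z$ that is not cotorsion lies in $\mathcal{GP}^{\perp}\setminus\mathcal{GF}^{\perp}$. Both $\mathcal{GP}^{\perp}$ and $\mathcal{GF}^{\perp}$ are closed under summands, and $\mathcal{PGF}^{\perp}$ is closed under finite sums, so the union hypothesis forces $\mathcal{PGF}^{\perp}=\mathcal{GP}^{\perp}$. Hence $\mathcal{GP}\subseteq\mathcal{PGF}\subseteq\mathcal{GF}$. This uses only finite direct sums, whereas the paper reaches a contradiction through a free presentation and $\Sigma$-cotorsionness of $R^{(X)}$.

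\textbf{The gap.} The remaining case, which you leave open, is exactly the case where $R$ is (left) perfect.
\begin{itemize}
\item If no such $Z$ exists, then $\mathcal{PGF}^{\perp}=\mathcal{GF}^{\perp}$. Every flat module lies in $\mathcal{PGF}^{\perp}$, so all flat modules are cotorsion, and $R$ is perfect.
\item Conversely, over a perfect ring every module is cotorsion, $\mathcal{GF}^{\perp}=\mathcal{PGF}^{\perp}$, and $\mathcal{GP}^{\perp}\subseteq\mathcal{GF}^{\perp}$ holds automatically.
\end{itemize}
So in this case the union hypothesis is vacuous and cannot supply anything. What is missing is a genuine theorem that $\mathcal{GP}=\mathcal{PGF}$ (equivalently $\mathcal{GP}\subseteq\mathcal{GF}$) over perfect rings. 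The paper treats this case separately and imports exactly that statement from its reference [CO, Corollary 3.4(1)].

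\textbf{Why your dimension-shifting plan cannot work.} The target inclusion $\mathcal{GF}^{\perp}\subseteq\mathcal{GP}^{\perp}$ is, over any ring, equivalent to $\mathcal{GP}\subseteq\mathcal{GF}$, since $(\mathcal{GF},\mathcal{GF}^{\perp})$ is a cotorsion pair. Every ingredient you list is available over every ring:
\begin{itemize}
\item $C\in\mathcal{GF}^{\perp}\subseteq\mathcal{PGF}^{\perp}$;
\item $\mathcal{PGF}^{\perp}$ is thick;
\item $\mathcal{FL}\subseteq\mathcal{PGF}^{\perp}$;
\item Gorenstein projectives are cycles of complete projective resolutions.
\end{itemize}
If the argument went through, it would settle Holm's question in general. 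Concretely, dimension shifting only relocates the problem: $\Ext^1_R(G,C)\cong\Ext^{n+1}_R(G_n,C)\cong\Ext^{n+1}_R(G,\Omega^nC)$, where $G_n$ are cosyzygies of $G$ and $\Omega^nC$ are syzygies of $C$ (these stay in $\mathcal{PGF}^{\perp}$). Nothing forces these groups to vanish unless $C$ has finite projective or injective dimension, which you do not have.

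To finish, you must use explicitly that $R$ is perfect in this case and invoke, or prove, the perfect-ring result. The final ``in particular'' is then harmless. In the generic case, $\mathcal{GP}\cap\mathcal{PGF}^{\perp}=\mathcal{GP}\cap\mathcal{GP}^{\perp}=\mathcal{P}$ contains every flat Gorenstein projective module. In the perfect case, flat modules are projective.
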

\begin{proof} Suppose $R$ is a perfect ring. Since $\mathcal{PGF}=\mathcal{GF}\subseteq\mathcal{GP}$, it follows that
${\mathcal{GP}^{\perp}}\subseteq\mathcal{PGF}^{\perp}$, and hence $\mathcal{GP}=\mathcal{PGF}$
by \cite[Corollary 3.4(1)]{CO}. For the non-perfect case,
we will prove that
$\mathcal{GP}\bigcap \mathcal{PGF}^{\perp}=\mathcal{P}$. Clearly,  $\mathcal{GP}\bigcap \mathcal{PGF}^{\perp}\supseteq\mathcal{P}$. On the other hand,
pick any nonzero $M\in\mathcal{GP}\bigcap\mathcal{PGF}^{\perp}$. Since $M\in \mathcal{PGF}^{\perp}=\mathcal{GP}^{\perp}\bigcup \mathcal{GF}^{\perp}$, we can divide it into two cases:

\textit{Case 1}: If $M\in \mathcal{GP}^{\perp}$, then $M\in \mathcal{P}$ since $\mathcal{GP}\bigcap\mathcal{GP}^{\perp}=\mathcal{P}$.

\textit{Case 2}: If $M\in \mathcal{GF}^{\perp}-\mathcal{GP}^{\perp}$, then any direct sum $M^{(I)}\in \mathcal{GF}^{\perp}-\mathcal{GP}^{\perp}$. Otherwise, by assumption,  $M^{(I)}\in\mathcal{GP}\bigcap\mathcal{GP}^{\perp}=\mathcal{P}$, which implies that $M$
is projective and thus $M\in \mathcal{GP}^{\perp}$, contradicting the choice of $M$. Now let
$${\xymatrix{0\ar[r]&N\ar[r]&R^{(X)}\ar[r]&M\ar[r]& 0}}$$
be a free presentation of $M$. It is clear that $N\in\mathcal{GP}\bigcap\mathcal{PGF}^{\perp}$, and thus
$N\in\mathcal{GF}^{\perp}-\mathcal{GP}^{\perp}$.
If not, $N\in \mathcal{GP}\bigcap\mathcal{GP}^{\perp}=\mathcal{P}$, which would imply that
the above exact sequence splits and
$M\in \mathcal{P}$, this contradicting $M\in \mathcal{GF}^{\perp}-\mathcal{GP}^{\perp}$.
Note that any direct sum of $N$ and $M$ lies in $\mathcal{GF}^{\perp}-\mathcal{GP}^{\perp}$.
We conclude that
$R^{(X)}\in \mathcal{GF}^{\perp}$ and thus is a $\sum$-cotorsion module.
By \cite[Theorem 3.3]{SS2}, every module in $\overline{R}$ is cotorsion, and in particular, every flat module is cotorsion. It follows by \cite[Proposition 3.3.1]{X96} that  $R$ is perfect, which  contradicts to the assumption that $R$ is not perfect.
Therefore, $\mathcal{GP}\bigcap \mathcal{PGF}^{\perp}=\mathcal{P}$.

Now let $M\in \mathcal{GP}$, there exists an epimorphic $\mathcal{PGF}$-precover $\varphi:Q\rightarrow M$ with
$\ker(\varphi)\in \mathcal{GP}\bigcap \mathcal{PGF}^{\perp}=\mathcal{P}$ (see \cite[Theorem 4.9]{SS2}). It follows that $M$ is a direct summand of the $PGF$-module $Q$, and
thus every Gorenstein projective module is Gorenstein flat. It follows by Lemma \ref{gpgfplfgp} that every flat Gorenstein projective module is projective in this case.
\end{proof}

\section{\bf Homological invariants of $\aleph_1$-coherent rings}

In the past few years, many algebraists have investigated various of homological invariants, including
\begin{itemize}
\item $\spli R$: the supremum of the projective lengths of injective left $R$-modules;
\item $\silp R$: the supremum of the injective lengths of projective left $R$-modules;
\item $\sfli R$: the supremum of the flat lengths of injective left $R$-modules.
\end{itemize}
In the same way, one may consider right $R$-modules and define the right invariants $\spli R^{\op}$, $\silp R^{\op}$, and $\sfli R^{\op}$ respectively. The relation between $\spli R$ and $\silp R$ is unclear for a general ring $R$ and ask whether these two invariants are always equal.  In the special case where $R$ is an Artin algebra, the equality $\spli R = \silp R$ is equivalent to  the so-called Gorenstein Symmetry Conjecture.

It is well-known that the equivalence of finiteness of both $\spli R$ and $\silp R$ implies that $\spli R = \silp R$. It was proved in  \cite{CE25} that when both $\spli R$ and $\silp R$ are finite, then  $\spli R^{\mathrm{op}}=\silp R^{\mathrm{op}}$. So we turn to investigate when $$\silp R + \silp R^{\mathrm{op}} = \spli R + \spli R^{\mathrm{op}}~~?$$ since $\spli R = \silp R$ whenever $R\cong R^{\mathrm{op}}$ in this case.  In this section, we will show that the equality $\silp R + \silp R^{\mathrm{op}} = \spli R + \spli R^{\mathrm{op}}$ always holds for two-sided $\aleph_1$-coherent rings, which is a new and nontrivial result.

\begin{prop}\label{1sflisilp}
 Let $R$ be a {left} $\aleph_1$-coherent ring. Then $\sfli R^{\mathrm{op}}\leq \silp R$.
\end{prop}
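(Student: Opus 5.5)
We may assume $n=\silp R<\infty$ and aim to show that every injective right $R$-module $E$ has flat dimension at most $n$, i.e.\ $\Tor_{n+1}^{R}(E,M)=0$ for every left $R$-module $M$. Since Tor commutes with direct limits and every module is a direct limit of finitely presented modules, it suffices to take $M$ finitely presented. Over a left $\aleph_1$-coherent ring such an $M$ is strongly $\aleph_1$-presented: it is $\aleph_1$-presented, and we use the observation after the definition of $\kappa$-coherence. So $M$ has a projective resolution $P_*\to M$ by $\aleph_1$-generated projectives, and every syzygy $\Omega^{k}M$ is strongly $\aleph_1$-presented.

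Every injective right module $E$ is a direct summand of a character module $\Hom_{\mathbb{Z}}(B,\mathbb{Q}/\mathbb{Z})$ with $B$ a left module; for instance $B=E^{+}$, since $E$ is a summand of $E^{++}$. It therefore suffices to prove $\Tor_{n+1}^R(\Hom_{\mathbb{Z}}(B,\mathbb{Q}/\mathbb{Z}),M)=0$ for all left modules $B$. For this we use the map $\Phi^{(n+1)}_M$ of (2) with $G=\mathbb{Q}/\mathbb{Z}$. By Proposition \ref{nSML}, this map is injective for all $B$ as soon as $\Omega^{n+1}M$ is strict $R\text{-Mod}$-Mittag-Leffler. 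Injectivity alone does not give vanishing, so we also need the target $\Hom_{\mathbb{Z}}(\Ext^{n+1}_R(M,B),\mathbb{Q}/\mathbb{Z})$ to be zero. The cleaner route is to shift dimensions. Because $\silp R=n$, every projective left module $P$ has $\operatorname{id}P\le n$, so $\Ext^{\ge 1}_R(\Omega^{n}M',P)=0$ for every module $M'$. Taking $M'=M$ gives $\Omega^n M\in{}^{\perp}\mathcal{P}$, and in particular $\Ext^{\ge1}_R(\Omega^nM,R^{(\aleph_1)})=0$.

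Now apply Proposition \ref{prop: 2.4} with $D=R$ to the strongly $\aleph_1$-presented module $\Omega^nM$. It yields $\Omega^nM\in{}^{\perp}\overline{R}$. The definable class $\overline{R}$ contains all flat modules, and also all Mittag-Leffler-type limits. By Lemma \ref{ML}, $\Omega^nM$ is then strict $\overline{R}$-Mittag-Leffler, and so is $\Omega^{n+1}M$. Using Proposition \ref{nSML} (the $n=0$ version applied to the syzygy) together with the fact that $\Hom_{\mathbb{Z}}(B,\mathbb{Q}/\mathbb{Z})$ is pure-injective, we would conclude that $\Omega^nM$ is flat relative to character modules. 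The most direct version is this: since $\Omega^nM\in{}^{\perp}\overline R$ and $\overline{R}$ is closed under character duals of flat modules, we get $\Tor_{1}^R(E,\Omega^nM)=0$ once we know that $E^{+}$ lies in $\overline{R}$, which holds because $E^{+}$ is flat when $E$ is injective and $R$ is left coherent-type. Dimension shifting then gives $\Tor_{n+1}^R(E,M)=\Tor_1^R(E,\Omega^nM)=0$.

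The main obstacle is this last passage from $\Ext$-orthogonality to $\Tor$-vanishing. It requires $\Ext^1_R(\Omega^nM,E^{+})=0$, which holds by the standard duality $\Tor_1(E,X)^{+}\cong\Ext^1(X,E^{+})$. That duality in turn requires $E^{+}\in\overline{R}$, i.e.\ that $E^+$ be a pure-epimorphic image of a pure submodule of a product of copies of $R$. I would first try to prove this via $\aleph_1$-coherence: products of projectives should behave well enough that flat character modules lie in $\overline R$. Alternatively, I would use that $\overline{R}$ is definable, hence closed under pure submodules of products, and that $E^{+}$ is a pure submodule of a product of copies of $R^{+}$-type modules. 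This step is where I expect the real work to lie.
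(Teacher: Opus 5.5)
The proposal has a genuine gap: its decisive step is left open, and the justification offered for it is wrong.

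\textbf{The gap.} Up to the application of Proposition \ref{prop: 2.4} you agree with the paper. From $\silp R=n$ you get $\Ext^{\ge1}_R(\Omega^nM,R^{(\aleph_1)})=0$; $\Omega^nM$ is strongly $\aleph_1$-presented; hence $\Omega^nM\in{}^{\perp}\overline{R}$. Your closing mechanism is also sound. The isomorphism $\Hom_{\mathbb{Z}}(\Tor_1^R(E,X),\mathbb{Q}/\mathbb{Z})\cong\Ext^1_R(X,E^{+})$ holds for all $X$ and $E$ with no hypotheses. So $\Tor_{n+1}^R(E,M)\cong\Tor_1^R(E,\Omega^nM)=0$ would follow once you know $E^{+}\in\overline{R}$.

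That membership is exactly what you do not establish. Flatness of $E^{+}$ for every injective right module $E$ is equivalent to $R$ being \emph{right} coherent (Cheatham--Stone). Nothing in the hypothesis gives this: left $\aleph_1$-coherence concerns $\aleph_1$-generated left ideals and does not even imply coherence. The commutative ring of Example \ref{ex:poly} is $\aleph_1$-coherent but not coherent.

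Your fallback does not work either. If the ``$R^{+}$-type'' modules are copies of the left module $\Hom_{\mathbb{Z}}(R_R,\mathbb{Q}/\mathbb{Z})$, these are injective and in general not in $\overline{R}$. For $R=\mathbb{Z}$, $\overline{R}$ is the class of torsion-free groups, which does not contain $\mathbb{Q}/\mathbb{Z}$. The intermediate remarks about $\Omega^{n+1}M$ being strict Mittag-Leffler, or ``flat relative to character modules'', are never turned into an argument.

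\textbf{How to close it, and how this compares with the paper.} The missing input is elementary duality, not coherence. By Herzog's elementary duality, a definable class is closed under $X\mapsto X^{++}$. Every injective right module $E$ is a direct summand of $(R^{+})^{J}\cong(R^{(J)})^{+}$ for some set $J$, where $R^{+}=\Hom_{\mathbb{Z}}({}_RR,\mathbb{Q}/\mathbb{Z})$. Hence $E^{+}$ is a summand of $(R^{(J)})^{++}\in\overline{R}$, over an arbitrary ring. With this input your argument closes, and it is a genuinely different route from the paper's.

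The paper avoids duality and uses Proposition \ref{prop: 2.4} only to produce Mittag-Leffler information:
\begin{itemize}
\item By Lemma \ref{ML}, the syzygies $\Omega^m(R/J)$ and $\Omega^{m+1}(R/J)$ are strict $\overline{R}$-Mittag-Leffler.
\item By Proposition \ref{nSML}, $\Phi^{(1)}$ then embeds $\Tor_1^R(\Hom_{\mathbb{Z}}(R,G),\Omega^m(R/J))$ into $\Hom_{\mathbb{Z}}(\Ext^1_R(\Omega^m(R/J),R),G)=0$.
\item It tests only against $B=R$, where the Ext vanishing is immediate from $\silp R=m$.
\item It finishes because every injective right module is a summand of some $\Hom_{\mathbb{Z}}(R,G)$ with $G$ divisible.
\end{itemize}
Your repaired version is shorter and bypasses Lemma \ref{ML} and Proposition \ref{nSML}, at the cost of importing elementary duality. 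The paper's version stays within the Mittag-Leffler toolkit it sets up earlier.
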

\begin{proof}
Let $R$ be a {left} $\aleph_1$-coherent ring. Assume first that $\operatorname{silp} R = \infty$; in that case the inequality holds trivially. Now suppose $\operatorname{silp} R = m < \infty$, and let $J$ be an arbitrary finitely generated ideal of $R$. Since $\operatorname{silp} R = m$, we have $\operatorname{Ext}_R^k(R/J,R^{(\aleph_1)})=0$ for all $k \ge m+1$. Hence $\operatorname{Ext}_R^n\bigl(\Omega^m(R/J),R^{(\aleph_1)}\bigr)=0$ for every $n \ge 1$, where $\Omega^m(R/J)$ denotes the $m$-th syzygy of $R/J$.

Because $R$ is {left} $\aleph_1$-coherent, the finitely presented $R$-module $R/J$ is strongly $\aleph_1$-presented; consequently $\Omega^m(R/J)$ is also strongly $\aleph_1$-presented. It then follows from Proposition \ref{prop: 2.4} that
$$
\operatorname{Ext}_R^n\bigl(\Omega^m(R/J),B\bigr)=0 \qquad (n\ge 1)
$$
for every module $B$ in $\overline{R}$, the definable closure of $R$. The class $\overline{R}$ is closed under direct limits and direct products, so by Lemma \ref{ML} the module $\Omega^m(R/J)$ is strict $\overline{R}$-Mittag-Leffler. Applying Proposition \ref{nSML} we obtain that the natural homomorphism
$$
\Phi_{\Omega^{m}(R/J)}^{(1)} \colon \Tor_{1}^{R}\!\bigl(\Hom_{\mathbb{Z}}(B,G),\, \Omega^{m}(R/J)\bigr) \longrightarrow \Hom_{\mathbb{Z}}\!\bigl(\Ext_{R}^{1}(\Omega^{m}(R/J),B), G\bigr)
$$
is injective for all $B\in\overline{R}$ and every divisible abelian group $G$.

Since $\Ext_R^1(\Omega^{m}(R/J),B)=0$, the above implies
$$
\Tor_{1}^{R}\!\bigl(\Hom_{\mathbb{Z}}(B,G),\, \Omega^{m}(R/J)\bigr)=0,
$$
and so
$$
\Tor_{m+1}^{R}\!\bigl(\Hom_{\mathbb{Z}}(R,G),\, R/J\bigr)=0.
$$
for every finitely generated ideal $J$ of $R$, every $B\in\overline{R}$ and every divisible abelian group $G$.
Therefore $\fd_{ R^{\mathrm{op}}}\Hom_{\mathbb{Z}}(R,G)\leq m$ for every finitely generated ideal $J$ of $R$, every $B\in\overline{R}$ and every divisible abelian group $G$.  Every injective right $R$-module $I$ is a direct summand of a module of the form $\Hom_{\mathbb{Z}}(R,G)$ for a suitable divisible abelian group $G$. Consequently,
$$\sfli R^{\mathrm{op}} \leq m=\operatorname{silp} R.$$
\end{proof}

\begin{prop}\label{3.2}
If $R$ is a {left} $\aleph_1$-coherent ring, then we have inequalities
$$
\spli R^{\mathrm{op}} \leqslant \silp R + \silp R^{\mathrm{op}} \leqslant \silp R + \spli R + \spli R^{\mathrm{op}}.
$$
\end{prop}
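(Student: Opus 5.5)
The plan is to prove the two inequalities separately. Both lean on Proposition \ref{1sflisilp} and on the standard fact, going back to Emmanouil and Talelli, that $\spli$ is controlled by $\sfli$ together with $\silp$ on the same side.

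\textbf{Left inequality.} We may assume $\silp R=m$ and $\silp R^{\mathrm{op}}=n$ are both finite, since otherwise there is nothing to prove.

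First, by Proposition \ref{1sflisilp}, which needs only left $\aleph_1$-coherence, we get $\sfli R^{\mathrm{op}}\le m$. So every injective right module $I$ has flat dimension at most $m$.

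Next, take a projective resolution of $I$ and let $K=\Omega^{m}I$ be its $m$-th syzygy. Then $K$ is flat, and $\Ext^{i}_{R^{\mathrm{op}}}(K,-)\cong \Ext^{i+m}_{R^{\mathrm{op}}}(I,-)$ for $i\ge1$.

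Now use $\silp R^{\mathrm{op}}=n$: every projective right module $P$ has $\mathrm{id}\,P\le n$, and therefore $\mathrm{id}\,F\le n$ for every flat right module $F$. The second claim is the step to check. The argument I would use is: $\Ext^{n+1}(-,P)$ vanishes for all projective $P$, and every flat $F$ is a direct limit of projectives. One then passes to $\Ext^{n+1}(X,F)$ either through a pure-injectivity argument or by using the character module $F^{+}$, which is injective over $R$ (the left side), together with the double-dual embedding. I expect this to be the delicate point.

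Alternatively, I would avoid the claim altogether. We only need $\Ext^{j}(I,P)$-type vanishing, and we can argue directly as follows. Take a flat resolution $0\to F_m\to\cdots\to F_0\to I\to0$. Each $F_i$ admits a pure-projective presentation. Dimension shifting, together with the fact that $\Ext^{>n}(-,P)=0$ for every projective $P$, gives $\Ext^{j}(I,P)=0$ for $j>m+n$.

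\textbf{From vanishing to the bound on $\spli$.} Here I use the standard criterion: an injective module $I$ has $\mathrm{pd}\,I\le t$ if $\Ext^{t+1}(I,-)$ vanishes. To obtain this from vanishing against projectives alone, I write $L=\Omega^{m+n}I$ and proceed as follows.

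1. The module $L$ is flat.
2. From $\Ext^{\ge1}(L,P)=0$ for all projectives $P$, and since $L$ has finite injective-dimension control through $n$, $L$ lies in $\mathcal{GP}$. Combined with flatness, the cotorsion/Gorenstein argument (a flat Gorenstein projective module of finite projective dimension is projective, by the Bazzoni et al. result quoted above with finite FPD) shows that $L$ is projective.

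This gives $\spli R^{\mathrm{op}}\le m+n$.

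\textbf{Right inequality.} The inequality $\silp R^{\mathrm{op}}\le \spli R+\spli R^{\mathrm{op}}$ is purely formal. We may assume $\spli R=a$ and $\spli R^{\mathrm{op}}=b$ are finite. The key step is that a finite $\spli R$ on one side bounds $\sfli R^{\mathrm{op}}$ on the other side via character modules. Concretely:

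1. For any injective left module $E$, its character module $E^{+}$ is flat, and $\mathrm{fd}\,P^{+}\le$ ... Rather than pursue that, I would use the direct argument for a projective right module $P$.
2. Embed $P$ into $P^{++}$. Here $P^{+}$ is an injective left module with $\mathrm{pd}\,P^{+}\le a$, so $\mathrm{fd}\,P^{+}\le a$.
3. Dualizing, $P^{++}$ has injective dimension at most $a$.
4. The module $P$ is a pure submodule of $P^{++}$. By $\spli R^{\mathrm{op}}=b$, the injectives in a coresolution of $P$ have projective dimension at most $b$. The standard splicing argument then gives $\mathrm{id}\,P\le a+b$.

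\textbf{Main obstacle.} The hardest step is the middle one in the left inequality: converting the $\Ext$-vanishing against projectives, at degree above $m+n$, into an actual bound on the projective dimension of injectives. I would first try to prove that the relevant high syzygy is flat and Gorenstein projective, and then use $\silp R^{\mathrm{op}}<\infty$ to force it to be projective.
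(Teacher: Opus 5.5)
Your outline has the same skeleton as the paper's proof. The paper gets the left inequality from Proposition~\ref{1sflisilp} together with $\spli S\le\sfli S+\silp S$ for $S=R^{\mathrm{op}}$, which it cites as \cite[Proposition 2.2]{E11}. It gets the right inequality from $\silp R^{\mathrm{op}}\le\sfli R+\spli R^{\mathrm{op}}\le\spli R+\spli R^{\mathrm{op}}$, where the first step is \cite[Corollary 22]{D23}. The gap is that you try to prove these two imported inequalities by hand, and neither argument works.

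On the left, your vanishing $\Ext^{j}(I,P)=0$ for $j>m+n$ carries no information. Once $\silp R^{\mathrm{op}}=n$, it holds for all $j>n$ and for every right module in place of $I$. Moreover, vanishing of $\Ext$ against projectives never bounds projective dimension by itself: over a QF ring it holds in all positive degrees for every module. The claim that the flat module $L=\Omega^{m+n}I\in{}^\perp\mathcal{P}$ is Gorenstein projective is unsupported. Nothing in your setup produces a $\Hom(-,\mathcal{P})$-exact coresolution $0\to L\to Q^0\to Q^1\to\cdots$ by projectives. Taken as a general principle (flat, in ${}^\perp\mathcal{P}$, finite $\silp$, hence Gorenstein projective, hence projective), it is already over $\mathbb{Z}$ a Whitehead-type freeness statement. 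Your auxiliary claim that $\mathrm{id}\,F\le n$ for every flat $F$ is also unjustified. Injective dimension is not preserved by direct limits over non-noetherian rings. The character-module route only gives $\mathrm{id}\,F^{++}=\fd F^{+}\le\sfli R$, which is a bound from the other side, and it does not pass to the pure submodule $F$ anyway. You name the Emmanouil--Talelli fact in your first sentence; citing it, rather than reproving it, is what closes this inequality.

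On the right, your step 4 does not follow. From $\mathrm{id}\,P^{++}\le a$ and the pure embedding $P\to P^{++}$ you learn nothing about $\mathrm{id}\,P$. The cokernel $C=P^{++}/P$ is flat, but its injective dimension is uncontrolled. For $k\ge a+2$ you only get $\Ext^{k}(M,P)\cong\Ext^{k-1}(M,C)$, which shifts the problem to another flat module. Knowing that the injectives in a coresolution of $P$ have projective dimension at most $b$ bounds projective dimensions; for instance it gives $\mathrm{pd}\,P^{++}\le b$. It does not bound the length of the injective coresolution of $P$, so no splicing argument terminates it. This inequality is therefore not formal. The paper obtains it from \cite[Corollary 22]{D23} applied to $R^{\mathrm{op}}$, together with $\sfli R\le\spli R$; your steps 2--3 do correctly reproduce that last part.
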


\begin{proof}
The first inequality in the statement follows by
$$
\spli R^{\mathrm{op}} \leqslant \sfli R^{\mathrm{op}} + \silp R^{\mathrm{op}} \leqslant \silp R + \silp R^{\mathrm{op}}.
$$
In the above chain of inequalities, the first one follows by applying \cite[Proposition 2.2]{E11} to the ring $R^{\mathrm{op}}$, whereas the second one is a consequence of Proposition \ref{1sflisilp}.

The second inequality in the statement follows by
$$
\silp R^{\mathrm{op}} \leqslant \sfli R + \spli R^{\mathrm{op}} \leqslant \spli R + \spli R^{\mathrm{op}}.
$$
for any ring $R$. In the above chain of inequalities, the first one follows by applying \cite[Corollary 22]{D23} to the ring $R^{\mathrm{op}}$, whereas the second one follows by the flat dimension of any module is always bounded by its projective dimension.
\end{proof}

\begin{thm}\label{silp+}
Let $R$ be a ring which is both left and right $\aleph_1$-coherent. Then the following conditions are equivalent:
\begin{enumerate}
\item[(i)] The invariants $\silp R$ and $\silp R^{\mathrm{op}}$ are finite.
\item[(ii)] The invariants $\spli R$ and $\spli R^{\mathrm{op}}$ are finite.
\end{enumerate}
If these conditions are satisfied, then
$$
\silp R = \spli R < \infty \quad \text{and} \quad \silp R^{\mathrm{op}} = \spli R^{\mathrm{op}} < \infty.
$$
Thus, we always have an equality $\silp R + \silp R^{\mathrm{op}} = \spli R + \spli R^{\mathrm{op}}$.
\end{thm}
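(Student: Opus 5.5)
Since $R$ is both left and right $\aleph_1$-coherent, Proposition \ref{3.2} applies to $R$ and, by symmetry, to $R^{\mathrm{op}}$. This gives four inequalities:
\begin{align*}
\spli R^{\mathrm{op}} &\leq \silp R + \silp R^{\mathrm{op}} \leq \silp R + \spli R + \spli R^{\mathrm{op}},\\
\spli R &\leq \silp R^{\mathrm{op}} + \silp R \leq \silp R^{\mathrm{op}} + \spli R^{\mathrm{op}} + \spli R.
\end{align*}
For the second chain I use that $(R^{\mathrm{op}})^{\mathrm{op}}=R$ and that right $\aleph_1$-coherence of $R$ is left $\aleph_1$-coherence of $R^{\mathrm{op}}$. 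In fact I will use the sharper intermediate inequalities from the proof of Proposition \ref{3.2}, namely
\[
\silp R^{\mathrm{op}} \leq \sfli R + \spli R^{\mathrm{op}} \leq \spli R+\spli R^{\mathrm{op}},
\]
which hold for any ring, together with their mirror image $\silp R\leq \spli R+\spli R^{\mathrm{op}}$.

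\emph{(i)$\Rightarrow$(ii).} If $\silp R$ and $\silp R^{\mathrm{op}}$ are finite, the first inequality of each chain bounds $\spli R^{\mathrm{op}}$ and $\spli R$ by the finite number $\silp R+\silp R^{\mathrm{op}}$.

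\emph{(ii)$\Rightarrow$(i).} If $\spli R$ and $\spli R^{\mathrm{op}}$ are finite, the sharper inequalities bound $\silp R^{\mathrm{op}}$ and $\silp R$ by the finite number $\spli R+\spli R^{\mathrm{op}}$. This step does not need coherence.

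\emph{Equality.} Once all four invariants are finite, I invoke the well-known fact recalled at the start of this section: if $\spli R$ and $\silp R$ are both finite, then they coincide. I would cite it as in \cite{E11} or \cite{CE25}. The standard argument runs as follows. If $\silp R=m<\infty$ and $\spli R=n<\infty$, take an injective $I$ with $\operatorname{pd} I=n$. Then $\Ext^n(I,P)\neq 0$ for some projective, indeed free, $P$, which forces $\operatorname{id}P\geq n$, so $m\geq n$. Symmetrically, take a projective $P$ with $\operatorname{id}P=m$; some injective $E$ has $\Ext^m(E,P)\neq0$, which forces $\operatorname{pd}E\geq m$. Applying this to $R$ and to $R^{\mathrm{op}}$ gives $\silp R=\spli R$ and $\silp R^{\mathrm{op}}=\spli R^{\mathrm{op}}$, and the equality of the sums follows in this case.

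\emph{The infinite case.} If the conditions fail, then by the equivalence both (i) and (ii) fail. So some $\silp$ is infinite and some $\spli$ is infinite, and both sums $\silp R+\silp R^{\mathrm{op}}$ and $\spli R+\spli R^{\mathrm{op}}$ equal $\infty$. Hence the equality holds in all cases.

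The main obstacle is the symmetric fact $\spli=\silp$ under finiteness, in particular the step producing a nonvanishing $\Ext^m(E,P)$ with $E$ injective. If that argument does not come out cleanly, I plan to cite \cite[Proposition 2.2]{E11} or the known result directly, since the paper treats this fact as established.
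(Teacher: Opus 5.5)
Your proof is correct and follows essentially the same route as the paper: (i)$\Rightarrow$(ii) comes from the first inequality of Proposition~\ref{3.2} applied to $R$ and to $R^{\mathrm{op}}$, (ii)$\Rightarrow$(i) comes from the coherence-free inequalities $\silp R^{\mathrm{op}}\leq \sfli R+\spli R^{\mathrm{op}}$ and their mirror, which the paper cites from \cite{D23}, and the equalities follow from the standard fact that finiteness of both $\silp$ and $\spli$ forces them to coincide. Your sketch of that standard fact is sound: in the top degree, $\Ext^n(I,-)$ takes epimorphisms to epimorphisms and $\Ext^m(-,P)$ takes monomorphisms to epimorphisms, so it suffices to test against free, respectively injective, modules; your explicit handling of the infinite case simply spells out what the paper leaves implicit.
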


\begin{proof}
(i) $\Rightarrow$ (ii): Since the assumption on $R$ and assertion (i) are both left-right symmetric, it suffices to show that $\spli R^{\mathrm{op}}$ is finite. This finiteness claim follows from the first inequality in Proposition \ref{3.2}.

(ii) $\Rightarrow$ (i): This follows from \cite[Proposition 23]{D23}, which is valid without any assumptions on the ring $R$.

The others follow by the equivalence of  (i) and (ii), and that either one implies that $\silp R=\spli R$ and $\silp R^{\mathrm{op}}=\spli R^{\mathrm{op}}$.
\end{proof}

\begin{cor}
Let $R$ be a ring which is isomorphic with its opposite $R^{\mathrm{op}}$. If $R$ is left $($and hence right$)$ $\aleph_1$-coherent, then $\silp R = \spli R$.
\end{cor}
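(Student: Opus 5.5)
The corollary should reduce directly to Theorem \ref{silp+}. I will do that in two steps: first transfer the coherence hypothesis to the other side, then use the symmetry $R\cong R^{\mathrm{op}}$ to collapse the identity $\silp R + \silp R^{\mathrm{op}} = \spli R + \spli R^{\mathrm{op}}$.

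\emph{Step 1: $R$ is two-sided $\aleph_1$-coherent.} Fix a ring isomorphism $\sigma\colon R\to R^{\mathrm{op}}$. It induces an isomorphism of module categories $R$-Mod $\simeq$ $R^{\mathrm{op}}$-Mod, namely restriction of scalars along $\sigma^{-1}$. Under this isomorphism the regular left module $R$ goes to the regular left $R^{\mathrm{op}}$-module, that is, to $R$ regarded as a right $R$-module. Left ideals of $R$ therefore correspond bijectively to right ideals, and generation and presentation by sets of a given cardinality are preserved. Hence left $\aleph_1$-coherence of $R$ gives right $\aleph_1$-coherence, and Theorem \ref{silp+} applies.

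\emph{Step 2: the invariants are symmetric.} The same category isomorphism preserves projectives, injectives, and all projective and injective dimensions. It follows that
$$\silp R=\silp R^{\mathrm{op}}, \qquad \spli R=\spli R^{\mathrm{op}}.$$

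\emph{Step 3: conclude.} I then split into cases according to finiteness.

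If $\silp R<\infty$, then by Step 2 condition (i) of Theorem \ref{silp+} holds. The theorem then gives $\silp R=\spli R<\infty$.

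If $\spli R<\infty$, then condition (ii) holds, and the theorem again gives equality.

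Otherwise both invariants are infinite, so they are equal as values in $\mathbb{N}\cup\{\infty\}$.

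Equivalently, the identity $\silp R+\silp R^{\mathrm{op}}=\spli R+\spli R^{\mathrm{op}}$ becomes $2\silp R=2\spli R$, which yields $\silp R=\spli R$ in $\mathbb{N}\cup\{\infty\}$ directly.

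The only point needing any care is Step 1: one has to check that $\aleph_1$-presentedness of ideals is transported by the isomorphism. This holds because the isomorphism is an equivalence of categories sending free modules of rank $\kappa$ to free modules of rank $\kappa$. I expect no real obstacle here.
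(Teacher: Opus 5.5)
Your proposal is correct and matches the paper's approach. The paper gives no separate proof and treats the corollary as immediate from Theorem~\ref{silp+}: the isomorphism $R\cong R^{\mathrm{op}}$ makes $R$ two-sided $\aleph_1$-coherent and forces $\silp R=\silp R^{\mathrm{op}}$ and $\spli R=\spli R^{\mathrm{op}}$, so the identity $\silp R+\silp R^{\mathrm{op}}=\spli R+\spli R^{\mathrm{op}}$ collapses to $\silp R=\spli R$; your Steps 1 and 2 only spell out this transport along the category isomorphism induced by $\sigma$, which the paper leaves implicit.
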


\begin{cor}
If $R$ is a commutative $\aleph_1$-coherent ring, then $\silp R = \spli R$.
\end{cor}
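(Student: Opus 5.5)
The statement to prove is the last corollary: a commutative $\aleph_1$-coherent ring $R$ satisfies $\silp R = \spli R$. I will reduce it to the preceding corollary, which treats rings isomorphic to their opposite, and ultimately to Theorem \ref{silp+}.

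\textbf{Step 1: commutativity gives the hypotheses of Theorem \ref{silp+}.} Since $R$ is commutative, the identity map is a ring isomorphism $R\cong R^{\mathrm{op}}$. Left and right modules coincide, and so do left and right ideals. Hence left $\aleph_1$-coherence is the same as right $\aleph_1$-coherence, and $R$ is two-sided $\aleph_1$-coherent.

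\textbf{Step 2: the invariants of $R$ and $R^{\mathrm{op}}$ agree.} The isomorphism $R\cong R^{\mathrm{op}}$ transports every homological invariant, because it induces an equivalence $R\text{-Mod}\simeq R^{\mathrm{op}}\text{-Mod}$. In particular it preserves projectives and injectives, so $\silp R=\silp R^{\mathrm{op}}$ and $\spli R=\spli R^{\mathrm{op}}$. For commutative $R$ this is immediate, since the two module categories are literally the same.

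\textbf{Step 3: apply Theorem \ref{silp+} and split into cases.}
\begin{itemize}
\item \emph{Finite case.} Suppose $\silp R$ is finite. By Step 2, $\silp R^{\mathrm{op}}$ is finite too, so condition (i) of Theorem \ref{silp+} holds. The theorem then yields $\silp R=\spli R$.
\item \emph{$\spli R$ finite.} Suppose instead $\spli R$ is finite. By Step 2, $\spli R^{\mathrm{op}}$ is finite, so condition (ii) holds. Since (ii) implies (i), the theorem again gives $\silp R=\spli R$.
\item \emph{Infinite case.} If neither invariant is finite, both equal $\infty$ and the equality is trivial.
\end{itemize}

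Equivalently, one can use the identity $\silp R+\silp R^{\mathrm{op}}=\spli R+\spli R^{\mathrm{op}}$ directly. By Step 2 it reads $2\silp R=2\spli R$, which gives $\silp R=\spli R$ in $\mathbb{N}\cup\{\infty\}$.

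The only point needing care is Step 2, specifically that the isomorphism $R\cong R^{\mathrm{op}}$ preserves $\aleph_1$-coherence and the invariants. This is routine, so I expect no real obstacle.
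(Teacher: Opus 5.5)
Your proposal is correct and follows the paper's route. Commutativity gives $R=R^{\mathrm{op}}$ and two-sided $\aleph_1$-coherence, so $\silp R=\silp R^{\mathrm{op}}$ and $\spli R=\spli R^{\mathrm{op}}$, and Theorem \ref{silp+} (through the preceding corollary on rings isomorphic to their opposite) then yields $\silp R=\spli R$, either by your exhaustive case split or by reading $2\silp R=2\spli R$ in $\mathbb{N}\cup\{\infty\}$.
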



Recall that a ring $R$ is said to be a left generalized coherent ring if the supremum of flat dimensions of all level right $R$-modules (i.e., right $R$-modules $M$ satisfying $\Tor^R_1 (M, T) = 0$ for any strongly finitely presented left $R$-module $T$) is finite; to be
left weakly coherent ring if any direct product of flat right $R$-modules has finite flat dimension. It follows by \cite[Corollary 2.11]{BGH14} that every  left coherent is left generalized coherent.
 Since the direct product of flat modules is level, one has that every left generalized coherent ring is left  weak coherent.

In 2011,  Emmanouil and Talelli implied that the above equation holds for two-sided $\aleph_0$-Noetherian rings (see \cite[Theorem 3.7]{E11}). In 2025,  Chatzistavridis and Emmanouil obtained that the above equation holds for two-sided weakly coherent rings (see \cite[Theorem 3.4]{CE25}). Subsequently,  Wang and Yang Suggested that the above equation holds for two-sided generalized coherent rings  (see \cite[Theorem 3.11]{WY25}).
 In the rest of this section, we will construct a class of $\kappa$-coherent rings which is not weakly coherent, and so also is not generalized coherent making our Theorem \ref{silp+} is new and non-trivial.

\begin{prop}\label{thm:main}
Let $(R,\m,k)$ be a commutative local ring with $\m^2=0$, and suppose $\dim_k\m=\kappa$ is infinite. Then $R$ is  a $\kappa$-coherent ring.
Set $M = R^{\aleph}$ the direct product of $\aleph$-copies of $R$, where ${\aleph}$ is an arbitrary infinite cardinal.
Then
$$
\fd_R(M)=\infty.
$$
Consequently, $R$ is not a weakly coherent ring, and so not a generalized ring.
\end{prop}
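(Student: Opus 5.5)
The argument splits into three parts: the $\kappa$-coherence of $R$, the computation $\fd_R(R^{\aleph})=\infty$, and the final deduction about weak and generalized coherence.

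\emph{$\kappa$-coherence.} Every proper ideal $I$ of $R$ lies in $\m$, and since $\m^2=0$ it is annihilated by $\m$. So $I$ is a $k$-vector space, and $I\cong k^{(\mu)}$ with $\mu=\dim_k I\le\kappa$. If $I$ is $\kappa$-generated, choose a surjection $R^{(\kappa)}\to I$ sending a basis onto a $k$-basis of $I$, padded with zeros if needed. Its kernel contains $\m R^{(\kappa)}\cong\m^{(\kappa)}$. Modulo this, the map is the $k$-linear surjection $k^{(\kappa)}\to I$, whose kernel has dimension at most $\kappa$. Hence the kernel is generated by $\m^{(\kappa)}$, which needs $\kappa\cdot\kappa=\kappa$ generators, together with at most $\kappa$ further elements. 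Thus $I$ is $\kappa$-presented. For $I=R$ there is nothing to prove.

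\emph{$\fd_R(M)=\infty$.} I will show that $\Tor_n^R(M,k)\neq 0$ for all $n$. The key step is to build a minimal free resolution of $k$ with finite ranks over any fixed finite subset, and then compare tensoring with $M$ against tensoring with $R$. It is cleaner, though, to use that flat dimension over a local ring with $\m^2=0$ is detected by syzygies.

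Let $0\to K\to F\to M\to0$ be a projective cover. For a flat module $N$ over a local ring with $\m^2=0$, flatness forces $\m N\cong \m\otimes N$. If some syzygy $\Omega^n M$ were flat, then going back one step would force earlier syzygies to be flat, by the following observation. Over this ring every module $X$ has $\m X\subseteq$ socle. In a minimal presentation $0\to\Omega\to F\to X\to0$ we have $\Omega\subseteq\m F$, so $\m\Omega=0$ and $\Omega$ is a $k$-vector space. A nonzero $k$-vector space is flat only if $k$ is flat, which fails since $\m\neq0$. So every nonzero syzygy in a minimal resolution is a direct sum of copies of $k$, and $\fd(X)<\infty$ forces a syzygy to vanish, i.e.\ $X$ has finite projective dimension. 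Since $\Omega^1 X\cong k^{(\lambda)}$ and $k$ has infinite projective dimension (its syzygy is $\m\cong k^{(\kappa)}\neq0$, and this repeats), $X$ has finite flat dimension iff $X$ is free.

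So it suffices to show that $M=R^{\aleph}$ is not free, equivalently not flat. Flatness would give $\m\otimes M\cong\m M$. But $\m\otimes R^{\aleph}\cong \m^{(\kappa)}\otimes\ldots$ concretely $k^{(\kappa)}\otimes_k (M/\m M)$, while $\m M=\m R^{\aleph}\subseteq\m^{\aleph}$, so the multiplication map is $k^{(\kappa)}\otimes_k k^{\aleph}\to (k^{(\kappa)})^{\aleph}$. Since $\kappa$ is infinite, the image $\m M$ consists of sequences whose entries lie in finitely generated subspaces of $\m$, but the multiplication map is still injective: $\m\otimes_R -$ commutes with products only up to this finiteness. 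The precise failure I must exhibit is an element of $\m\otimes M$ killed by multiplication, or equivalently $\Tor_1(R/\m\text{-ideal},M)\ne0$.

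I expect this step to be the main obstacle. I would try the finitely generated ideal $xR$ with $0\neq x\in\m$. Here $\Tor_1(R/xR,M)=\{v\in M: xv=0\}/\mathrm{ann}(x)M$, where $\mathrm{ann}(x)=\m$. A vector $v\in R^{\aleph}$ satisfies $xv=0$ iff $v\in\m^{\aleph}$. On the other hand $\m R^{\aleph}$ consists of sequences spanning a finite-dimensional subspace of $\m$. Since $\kappa$ and $\aleph$ are infinite, choose $v\in\m^{\aleph}$ whose coordinates run through infinitely many linearly independent elements of $\m$. Then $v\notin\m M$, so $\Tor_1\neq0$ and $M$ is not flat. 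Hence $M$ is not free, so by the syzygy analysis $\fd_R M=\infty$.

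\emph{Conclusion.} $R^{\aleph}$ is a product of flat modules with infinite flat dimension, so $R$ is not weakly coherent. Since generalized coherent implies weakly coherent, $R$ is not generalized coherent either.
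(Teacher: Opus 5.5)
Your proof is correct, and for the central step it takes a different route from the paper's.

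\textbf{Coherence.} You write down an explicit presentation $0\to K\to R^{(\kappa)}\to I\to 0$. Here $K$ is generated by $\m R^{(\kappa)}$ together with lifts of a basis of a subspace of $k^{(\kappa)}$. This is more explicit than the paper's one-line appeal to $R$ being $\kappa$-Noetherian.

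\textbf{Non-flatness.} Your witness is essentially the paper's. The paper exhibits $a\otimes y\neq 0$ in the kernel of $\m\otimes_R M\to M$, i.e.\ $\Tor_1^R(k,M)\neq 0$. You show $y\in(0:_M x)\setminus \m M$, i.e.\ $\Tor_1^R(R/xR,M)\neq 0$. Both rest on the same observation: every element of $\m M$ has its coordinates in a finite-dimensional subspace of $\m$.

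\textbf{Infinite flat dimension.} The real difference is how you pass from non-flatness to $\fd_R M=\infty$.
\begin{itemize}
\item The paper dimension-shifts along $0\to\m\to R\to k\to 0$ and uses $\m\cong k^{(\kappa)}$ to get $\Tor_{n+1}^R(k,M)\cong\Tor_n^R(k,M)^{(\kappa)}$. By induction all $\Tor_n^R(k,M)$ are nonzero, and no facts about resolutions are needed.
\item You prove a structural fact instead. In a minimal resolution every syzygy of index $\geq 1$ is a $k$-vector space, and none of these is flat unless it is zero. Hence every module of finite flat dimension is free.
\end{itemize}
Your route shows flat $=$ projective $=$ free over $R$, and that every non-free module has infinite flat dimension. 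So nothing about $M$ matters beyond its non-flatness. The cost is that you need minimal presentations of arbitrary modules.

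\textbf{Two small repairs.}
\begin{itemize}
\item Justify that minimal presentations exist. Here it is immediate. Lifting a $k$-basis $(\bar x_i)$ of $X/\m X$ gives generators, since $\m X=\m\bigl(\sum Rx_i+\m X\bigr)\subseteq\sum Rx_i$. The kernel of the induced map from the free module $F$ lies in $\m F$ by linear independence modulo $\m$. Alternatively, note that $R$ is perfect.
\item Delete the sentence claiming that the multiplication map $\m\otimes_R M\to M$ is injective. It is false: your own $y$ gives $x\otimes y\neq 0$ in its kernel, because $\m\otimes_R M\cong \m\otimes_k M/\m M$.
\end{itemize}
The phrase ``going back one step would force earlier syzygies to be flat'' is also muddled, but the argument that follows it does not use it.
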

\begin{proof}
	Since $\m^2=0$, $\m$ is the unique prime ideal. And since  $\dim_k\m=\kappa$, then $\m$ can be generated by $\kappa$ elements. And so $R$ is a $\kappa$-Noetherian ring, i.e., every ideal of $R$ can be generated by $\kappa$ elements. So it is easy to verify that  $R$ is also $\kappa$-coherent.
	
	Let $M =R^{\aleph}$. Take mutually distinct coordinates $i_1, i_2, \dots$ in $\aleph$, and take linearly independent elements $v_1, v_2, \dots$ in $\m$. Define $y = (y_i)_{i \in {\aleph}} \in \m^{\aleph}$:
	$$y_{i_n} = v_n, \quad y_i = 0 \quad (i
	\notin \{i_n\mid n=1,2,\dots\}).$$
	
	Claim that $y \not\in \m M$. Otherwise,
	$$y = \sum_{j=1}^t a_j x^{(j)}, \quad a_j \in \m, x^{(j)} \in M.$$
	In each coordinate $i$, since $m^2 = 0$, we have
	$$y_i = \sum_{j=1}^t a_j x_{i}^{(j)} \in \operatorname{span}_k\{a_1, \dots, a_r\}.$$	
	This forces all $v_i$ to lie in the same finite-dimensional subspace, which is a contradiction.
	
	Since $\m$ is killed by multiplication, there is a natural isomorphism
	$$\m \otimes_R M \cong \m \otimes_R (M/\m M).$$	
	Take $0
	\not= a \in \m$. Because $a
	\not= 0$ and $\bar{y}
	\not= 0$, the tensor
	$a \otimes_k \bar{y}$
	is non-zero, and so the tensor 	$$a \otimes_R y\not=0.$$  But its image under the multiplication map
	$$\m \otimes_R M \to M$$
	is $ay = 0$. Tensoring the following sequence with $M$
$$0\rightarrow\m\rightarrow R\rightarrow k\rightarrow 0,$$	
	we get
	$$0 \to \operatorname{Tor}_1^R(k, M) \to \m \otimes_R M \to M,$$
	so $\operatorname{Tor}_1^R(k, M)
	\not= 0$.
	
	Furthermore, because $\m \cong k^{(\kappa)}$ as an $R$-module, for $n \geq 1$ by dimension shift we have
	$$\operatorname{Tor}_{n+1}^R(k, M) \cong \operatorname{Tor}_n^R(\m, M) \cong \operatorname{Tor}_n^R(k, M)^{(\kappa)}.$$
	By induction, we obtain
	$$\operatorname{Tor}_n^R(k, M)
	\not= 0 \quad (n \geq 1).$$
	
	Therefore $\operatorname{fd}_R M = \infty$. And so $R$ is not weakly coherent; generalized coherent implies weakly coherent, and the last item follows.
\end{proof}

Finally, we give a class of rings satisfying the hypotheses of Proposition~\ref{thm:main}.

\begin{exa}\label{ex:poly}
Let $k$ be a field and let
$$
R = \frac{k[x_i\mid i\in\aleph_1]}{(x_ix_j \mid i,j\in\aleph_1)}.
$$
Write $\bar{x_i}$ for the images of the variables.
The maximal ideal $\m=(\bar{x_i}\mid i\in\aleph_1)$ satisfies $\m^2=0$, and $R/\m\cong k$.
As a $k$-vector space, $\m$ has basis $\{\bar{x_i}\mid i\in\aleph_1\}$, so $\dim_k\m=\aleph_1$.
Then $R$ is an $\aleph_1$-coherent ring but is neither weakly coherent nor generalized coherent.
\end{exa}

\begin{proof}
The relation $\bar{x_i}\bar{x_j}=0$ for all $i,j$ gives $\m^2=0$.
Linear independence of the $\bar{x_i}$: if $\sum a_i \bar{x_i}=0$ in $R$, then the corresponding polynomial belongs to the ideal generated by all $x_ix_j$, but that ideal contains only polynomials whose terms have degree at least $2$, while $\sum a_ix_i$ is homogeneous of degree $1$; hence all $a_i=0$. Hence $R$ is not a weakly coherent ring, and so is not generalized coherent. Since $\m$ can not be generated by countable elements, $R$ is not $\aleph_0$-Noetherian. However, every ideal of $R$ can be generated by $\aleph_1$ elements, and so $R$ is an $\aleph_1$-Noetherian (and hence an $\aleph_1$-coherent) ring.
\end{proof}



\end{document}